\documentclass[a4paper,11pt]{amsart}
\PassOptionsToPackage{no-math}{fontspec}
\usepackage[UTF8,scheme=plain,fontset=fandol]{ctex}
\usepackage{amsmath}
\usepackage{amsfonts}

\usepackage{amscd}
\usepackage{amsthm}
\usepackage{amssymb} 
\usepackage{latexsym}
\usepackage{mathrsfs}
\usepackage{euscript}
\usepackage{array}
\usepackage{enumerate}
\usepackage{dsfont}
\usepackage{color}
\usepackage{wasysym}
\usepackage{graphicx}
\usepackage{placeins}

\usepackage[colorlinks,citecolor=red,pagebackref,hypertexnames=false, breaklinks]{hyperref}

\newcommand{\bel}[1]{\begin{equation}\label{#1}}

\newcommand{\be}{\begin{equation}}

\newcommand{\qe}{\end{equation}}

\newcommand{\dd}{\mathrm{d}}

\newcommand{\Hmm}[1]{\leavevmode{\marginpar{\tiny%
$\hbox to 0mm{\hspace*{-0.5mm}$\leftarrow$\hss}%
\vcenter{\vrule depth 0.1mm height 0.1mm width \the\marginparwidth}%
\hbox to
0mm{\hss$\rightarrow$\hspace*{-0.5mm}}$\\\relax\raggedright #1}}}

\theoremstyle{plain}
\newtheorem{thm}{Theorem}[section]
\newtheorem{theorem}[thm]{Theorem}
\newtheorem{lemma}[thm]{Lemma}
\newtheorem*{unnumberedlemma}{Lemma}

\newtheorem{corollary}[thm]{Corollary}

\newtheorem{proposition}[thm]{Proposition}

\theoremstyle{definition}

\newtheorem*{remark}{Remark}

\numberwithin{equation}{section}

\begin{document}
\raggedbottom 

\date{\today}

\title[Backstepping stabilization with a zero speed]{Backstepping Stabilization of a Hyperbolic System with a Zero Characteristic Speed}


\author[Zhiqiang Wang]{Zhiqiang Wang\textsuperscript{1}}
\thanks{\textsuperscript{1}Zhiqiang Wang: School of Mathematical Sciences and Shanghai Key
Laboratory for Contemporary Applied Mathematics, Fudan University, Shanghai
200433, China.  Email: \texttt{wzq@fudan.edu.cn}}
\author[Wenhao Yang]{Wenhao Yang\textsuperscript{2}}
\thanks{\textsuperscript{2}Wenhao Yang: School of Mathematical Sciences, Fudan University,
Shanghai 200433, China.  Email: \texttt{whyang23@m.fudan.edu.cn}}

   %


\begin{abstract}
This paper investigates the boundary stabilization of a linear hyperbolic system with a zero characteristic speed. We address a case that cannot be stabilized by proportional boundary feedback and develop a backstepping design for the critical setting where the zero-speed subsystem has no intrinsic dissipation. To this end, we construct a Volterra transformation leading to a nonstandard integro-differential target system. Its asymptotic stability in $L^2$ is established by both the Lyapunov--LaSalle method and a spectral semigroup argument. The backstepping kernel equations are explicitly solved, yielding an explicit boundary feedback law that asymptotically stabilizes the original system. Numerical simulations are provided to illustrate the effectiveness of the proposed control design.
\end{abstract}

\maketitle 

\begin{center}
\begin{minipage}{0.8\textwidth}
\small
\noindent\textbf{Keywords.} Hyperbolic systems, zero characteristic speed, boundary stabilization, backstepping, asymptotic stability.

\smallskip
\noindent\textbf{2020 Mathematics Subject Classification.} 93D15, 93D20, 35L50.
\end{minipage}
\end{center}

\section{Introduction}
 
Hyperbolic systems arise naturally in a wide range of applications,
including fluid dynamics, traffic flow, and elasticity; see
Chapter~1 of \cite{BC16} for representative examples and the
subsequent chapters for systematic treatments of stability and boundary
stabilization; for controllability, see also \cite{Li94}.
In these systems, propagation along characteristics provides a natural mechanism
through which boundary inputs influence the state in the interior
of the domain.

However, when a characteristic speed vanishes, the corresponding
state component has no spatial transport, and it cannot be
directly influenced by boundary inputs. The presence of zero
characteristic speeds therefore introduces additional difficulties and
challenges in controllability and stabilization; see, e.g.,
\cite{LR10,CGW09}. As pointed out in Problem~15 of \cite{LR10},
the situation becomes even more challenging when internal controls are
unavailable and control has to rely only on boundary inputs.

The controllability and stabilization of hyperbolic systems with zero
characteristic speeds have attracted considerable attention.

For controllability problems, when distributed controls are allowed,
Li and Yu \cite{LY03} established exact controllability by applying
distributed controls to the zero-speed components and boundary controls
to the remaining components. Li and Rao \cite{LR09} showed that, subject
to suitable compatibility conditions, exact controllability can be
achieved using distributed controls acting only on the components
associated with the nonzero characteristic speeds. For a simplified
model, they also obtained exact controllability by means of locally
supported switching controls.

When only boundary controls are available, the situation is more
restrictive. In the presence of zero characteristic speeds, Wang and Yu \cite{WY08} obtained partial exact
controllability, where only the components associated with the nonzero
characteristic speeds are controlled. Yu \cite{Yu18} characterized the exact boundary controllability
of a class of linear hyperbolic systems with an identically zero
characteristic speed in terms of a necessary and sufficient condition
on the initial and final states. For certain nonlinear systems, however,
full exact boundary controllability can still be achieved.
Gugat \cite{Gugat03} treated a critical flow model involving a vanishing
characteristic speed, while Coron, Glass and Wang \cite{CGW09} employed
the return method to establish exact boundary controllability for
quasilinear hyperbolic systems whose vanishing characteristic speed can
be driven away from zero.

Existing stabilization results for hyperbolic systems with zero
characteristic speeds may be grouped here into two lines of research:
direct approaches based on the dissipative or spectral properties of
the closed-loop system, and backstepping-based approaches. Among the direct approaches, Yong \cite{Yong19} established exponential
stability for a class of linear hyperbolic systems with zero
characteristic speeds under a structural stability condition.
More recently, Wang and Yao \cite{WY25} obtained local exponential
stability for a broad class of quasilinear hyperbolic systems with
zero characteristic speeds by exploiting a partial dissipative
structure of the source terms.

The work most directly related to our model is that of Hu and Wang
\cite{HW16}. They considered the coupled system
\[
\begin{aligned}
u_t&=\alpha u_x+\beta v,\\
v_t&=\gamma u,
\end{aligned}
\qquad x\in(0,2\pi),
\]
where $u$ is the propagating component and $v$ is the zero-speed
component. They assumed $\alpha<0$ without loss of generality, so that
$x=0$ is the inflow boundary for $u$. For the proportional boundary feedback
$u(t,0)=k u(t,2\pi)$, they showed that the closed-loop system is
asymptotically stable when $\beta\gamma<0$ and $|k|<1$, whereas no
choice of $k$ achieves asymptotic stability when $\beta\gamma\geq0$. Their main proof combines a detailed spectral
analysis with the Riesz basis method, and they also outlined an alternative proof based on the Lyapunov--LaSalle method. Moreover, the resulting stability is not
exponential because a sequence of closed-loop eigenvalues converges to
zero.

An alternative approach is based on backstepping, which uses a Volterra
transformation to map the original system into a stable target system;
see Krstic and Smyshlyaev \cite{KS08} for a systematic treatment of
the method and \cite{survey26} for a recent survey on PDE backstepping
stabilization. The presence of a
zero-speed state makes this construction particularly difficult because
that state cannot be stabilized directly through characteristic
propagation. Consequently, backstepping results for such systems remain
limited; see also Section~3.1.3 of \cite{survey26}.

De Andrade, Vazquez, Karafyllis and Krstic \cite{AVKK22} first
developed a backstepping design for the following $3\times3$ system:
\[
\begin{aligned}
u_t&=-\lambda_1u_x+c_{12}p+c_{13}v,\\
p_t&=\lambda_2p_x+c_{21}u+c_{23}v,\\
v_t&=c_{31}u+c_{32}p-c_{33}v,
\end{aligned}
\]
where $u$ and $p$ are the two counter-propagating states and $v$ is a
scalar zero-speed state. They applied an invertible Volterra
transformation only to the propagating states, leaving $v$ unchanged.
In the target system, the transport states are decoupled and stable,
while the zero-speed state is input-to-state stable with respect to
them because of its pre-existing damping term $-c_{33}v$.

This design was subsequently extended in \cite{AVKK24} to a
$(1+n+1)\times(1+n+1)$ system containing two counter-propagating
scalar states and an $n$-dimensional zero-speed subsystem. Assuming
that the internal dynamics matrix $\Psi$ of the zero-speed subsystem is
Hurwitz, the authors constructed a full-state boundary feedback that
achieves exponential stability in $L^2$. Thus, both designs require the
zero-speed subsystem itself to possess a suitable intrinsic dissipative
structure.

Building upon the backstepping framework of de Andrade et al., recent
studies have addressed the compensation of arbitrarily long input
delays for hyperbolic systems with zero characteristic speeds, first for
a single delayed boundary input \cite{XSP26} and subsequently for
general multi-input systems with asynchronous delays \cite{SLXY26}.

The present work addresses the case $\beta\gamma>0$ for a
$2\times2$ hyperbolic system with an undamped zero-speed component.
This case cannot be stabilized by the proportional boundary
feedback considered in \cite{HW16}, and the zero-speed component
does not satisfy the intrinsic stability assumptions used in
\cite{AVKK22,AVKK24}. We construct an explicit backstepping
boundary feedback that achieves asymptotic stability in $L^2$.

A key feature of our approach is the use of a nonstandard
integro-differential target system whose transport component vanishes in
finite time, after which the remaining zero-speed dynamics reduce to an
autonomous evolution equation governed by a Volterra integral operator.
Its asymptotic stability in $L^2$ is proved in two ways: by combining a
Lyapunov functional with LaSalle's invariance principle and by using an
abstract spectral semigroup argument. Together with the
explicitly solvable backstepping kernel system, this yields an explicit
boundary feedback law for the original system, which is convenient to
implement. In contrast to proportional control, the resulting feedback
\eqref{eq:explicit-feedback-law} provides a more powerful feedback
mechanism by incorporating weighted information from the full
distributed state.

The remainder of this paper is organized as follows.  Section 2
formulates the problem and states the main result.  Section 3 constructs
the backstepping transformation, solves the kernel equations, and derives
the explicit boundary feedback law.  Section 4 establishes the
well-posedness and asymptotic stability of the target system in $L^2$.
The stability is proved first by combining a Lyapunov functional with
LaSalle's invariance principle and then by applying an alternative abstract
semigroup approach.  Finally, Section 5 presents numerical simulations
illustrating the effectiveness of the proposed approach.

\section{Problem formulation and main results}

We consider the following coupled first-order hyperbolic system
with one zero characteristic speed:
\begin{equation}\label{eq:system}
\left\{
\begin{aligned}
u_t(t,x)-u_x(t,x)&=\beta v(t,x),\\
v_t(t,x)&=\gamma u(t,x),
\end{aligned}
\right.
\qquad (t,x)\in (0,\infty)\times(0,1),
\end{equation}
where $u(t,x)$ and $v(t,x)$ denote the system states, and
$\beta,\gamma \neq 0$ are given constants.

The system is supplemented with the initial conditions
\begin{equation}\label{eq:initial}
u(0,x)=u_0(x),\quad
v(0,x)=v_0(x),\quad x\in(0,1),
\end{equation}
and the boundary control
\begin{equation}\label{eq:boundary}
u(t,1)=U(t),\qquad t>0.
\end{equation}

Throughout this paper, we assume that
$(u_0,v_0)\in L^2(0,1)\times L^2(0,1)$.  Our objective is to design a boundary feedback
law $U=\mathcal{F}(u,v)$ such that the resulting closed-loop system is
asymptotically stable in $L^2(0,1)\times L^2(0,1)$.

For related well-posedness results and the obstruction to exact
null controllability in finite time, we refer to \cite{HW16}.

For $\beta\gamma<0$, Hu and Wang \cite{HW16} showed that a simple
proportional boundary feedback achieves asymptotic stabilization.
Therefore, we focus on the complementary case $\beta\gamma>0$.

\subsection{Main result}

\begin{theorem}\label{thm:main}
Assume that $\beta\gamma>0$ and let $U$ be the boundary feedback
given by \eqref{eq:explicit-feedback-law}. Then, for every
$(u_0,v_0)\in L^2(0,1)\times L^2(0,1)$, the closed-loop
initial-boundary value problem
\begin{equation}\label{eq:main-closed-loop}
\left\{
\begin{aligned}
u_t-u_x&=\beta v,
&& (t,x)\in(0,\infty)\times(0,1),\\
v_t&=\gamma u,
&& (t,x)\in(0,\infty)\times(0,1),\\
u(t,1)&=U(t),
&& t>0,\\
u(0,x)&=u_0(x),\quad v(0,x)=v_0(x),
&& x\in(0,1),
\end{aligned}
\right.
\end{equation}
admits a unique solution in the sense of distributions
\[
(u,v)\in C\bigl([0,\infty);
L^2(0,1)\times L^2(0,1)\bigr).
\]
Moreover, the closed-loop system is asymptotically stable in
$L^2(0,1)\times L^2(0,1)$. In particular,
\[
\lim_{t\to\infty}
\|(u(t,\cdot),v(t,\cdot))\|_{L^2(0,1)\times L^2(0,1)}
=0.
\]
\end{theorem}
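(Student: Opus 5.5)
The plan is to use a backstepping Volterra transformation that maps the closed-loop system \eqref{eq:main-closed-loop} to a target system that is easier to analyse. The target is built so that its transport component dies in finite time and its zero-speed component obeys a dissipative Volterra-type evolution. Well-posedness and asymptotic stability are proved for the target and then transferred back through the bounded invertible transformation.

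\textbf{Step 1: transformation and kernels.} I would set
\[
w(t,x)=u(t,x)-\int_x^1 K(x,y)u(t,y)\,\dd y-\int_x^1 L(x,y)v(t,y)\,\dd y
\]
and keep $v$ unchanged. The target is $w_t-w_x=0$ with $w(t,1)=0$. Since $v$ cannot be damped through transport, the $v$-equation becomes integro-differential: writing $u=w+\int_x^1(\cdots)$ via the inverse transform, the aim is
\[
v_t=\gamma w+\gamma\int_x^1 M(x,y)w(t,y)\,\dd y-k\int_x^1 v(t,y)\,\dd y,
\]
with a constant $k>0$ chosen using $\beta\gamma>0$. Differentiating the transform and integrating by parts yields first-order hyperbolic kernel equations in the triangle $0\le x\le y\le 1$, with boundary conditions on $y=x$ and $y=1$. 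I expect these to be solvable explicitly; the constant coefficients and the product $\beta\gamma>0$ should give exponential or Bessel-type kernels. Evaluating the transform at $x=1$ with $w(t,1)=0$ gives the feedback: $U(t)$ is the boundary value $u(t,1)$ forced by $w(t,1)=0$, written out explicitly in terms of $u$ and $v$. This should be \eqref{eq:explicit-feedback-law}. The transformation is of Volterra type with bounded kernels, so it is an isomorphism of $L^2(0,1)^2$.

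\textbf{Step 2: well-posedness.} The target $w$-equation is solved explicitly: $w(t,x)=w_0(x+t)$ for $x+t<1$, and $w\equiv0$ for $t\ge1$. Given $w$, the $v$-equation is a linear ODE in $L^2$ driven by the bounded operator $-kV$, where $Vf(x)=\int_x^1 f(y)\,\dd y$, plus a source term. This gives a unique $v\in C([0,\infty);L^2)$. Pulling back through the isomorphism gives the unique distributional solution of \eqref{eq:main-closed-loop}.

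\textbf{Step 3: stability (the main obstacle).} For $t\ge1$ we have $v_t=-kVv$. This decay is not exponential, since $V$ is quasi-nilpotent, so I would use a Lyapunov--LaSalle argument. Let $E=\int_0^1 v^2$. Because $(Vv)_x=-v$ and $Vv(1)=0$,
\[
\frac{\dd E}{\dd t}=-2k\int_0^1 v\,Vv\,\dd x=-k\bigl(Vv(0)\bigr)^2=-k\Bigl(\int_0^1 v\,\dd x\Bigr)^2\le0 .
\]
Trajectories are precompact in $L^2$: for smooth data they are bounded in $H^1$, and density together with the contraction property handles general data. LaSalle's principle then reduces the problem to the largest invariant set on which $\int_0^1 v(t,\cdot)\,\dd x=0$ for all $t$. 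Differentiating this identity repeatedly in $t$ gives $\int_0^1 V^n v_0\,\dd x=0$ for all $n$. This says that $v_0$ is orthogonal to every polynomial in $(1-x)$, so $v_0=0$. Hence $v(t)\to0$ in $L^2$, and together with $w\equiv0$ for $t\ge1$ the target state tends to $0$. Applying the inverse transformation yields $\|(u,v)(t)\|\to0$.

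The delicate points are choosing the target so that the kernel equations remain compatible, since the $v$-equation couples into the kernel boundary conditions, and making the compactness step rigorous. As a backup for the second point, I would show that $-kV$ has no eigenvalues on the imaginary axis, because $V$ has none except possibly $0$ and $\ker V=\{0\}$. Then the Arendt--Batty--Lyubich--V\~u theorem applies to this contraction semigroup, whose generator is bounded with spectrum $\{0\}$.
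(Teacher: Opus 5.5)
There is a genuine gap in Step 1, and it breaks the design. Your transformation integrates over $(x,1)$, so at the controlled end both integrals vanish and $w(t,1)=u(t,1)$ identically. Imposing $w(t,1)=0$ therefore does not produce a feedback law at all. It simply forces $U\equiv0$, the open-loop condition, which is not stabilizing when $\beta\gamma>0$. This is Hu--Wang's obstruction with $k=0$, and the open-loop simulation shows the energy growth.

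Your map is an $L^2$-isomorphism, so if the target were asymptotically stable, the $U\equiv0$ loop would be too. Hence the ansatz $v_t=\gamma w+\cdots-k\int_x^1 v\,\mathrm{d}y$ with $k>0$ is unattainable. You can see the mechanism directly:
\begin{itemize}
\item With $L(x,x)=\beta$, $L_x=\beta K$ and $K_x+K_y=\gamma L$ on $\{x\le y\le1\}$, one has $L(x,y)=\beta\bigl(1-\int_x^yK(s,y)\,\mathrm{d}s\bigr)$.
\item Then $z:=v-\int_x^1K(x,y)v(t,y)\,\mathrm{d}y$ satisfies $z_t=\gamma w+\beta\gamma\int_x^1z(t,y)\,\mathrm{d}y$.
\item Once $w\equiv0$, this gives $\frac{\mathrm{d}}{\mathrm{d}t}\|z\|^2=+\beta\gamma\bigl(\int_0^1z\,\mathrm{d}x\bigr)^2\ge0$.
\end{itemize}
So the coefficient of the nonlocal term is not a free design constant. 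It is dictated by the kernel relations, and in your orientation it has the anti-dissipative sign.

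What is needed is the paper's construction, which differs from yours in two ways.
\begin{itemize}
\item Integrate over $(0,x)$: $w=u+\int_0^xKu+\int_0^xLv$. Then $w(t,1)=0$ gives the nontrivial law $U=-\int_0^1K(1,y)u\,\mathrm{d}y-\int_0^1L(1,y)v\,\mathrm{d}y$.
\item Transform the zero-speed state as well: $z=v+\int_0^xK(x,y)v(t,y)\,\mathrm{d}y$.
\end{itemize}
With $K_x+K_y=\gamma L$, $L_x=\beta K$, $K(x,0)=0$ and $L(x,x)=\beta$, one has $L(x,y)=\beta\bigl(1+\int_y^xK(s,y)\,\mathrm{d}s\bigr)$. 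This identity turns $-\gamma\int_0^xLv$ into $-\beta\gamma\int_0^xz$. The constant is therefore forced to be $\beta\gamma$, and it is dissipative exactly because $\beta\gamma>0$.

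Keeping $v$ unchanged does not yield your target even in this orientation. Let $\mathcal K$ be the Volterra operator with kernel $K$. The reduced generator for $v$ is then $-\beta\gamma(I+\mathcal K)^{-1}\mathcal V(I+\mathcal K)$, not $-k\mathcal V$, because $\mathcal K$ and $\mathcal V$ do not commute. So the energy identity of your Step 3 fails for $\int_0^1v^2$. The weighted energy $\|(I+\mathcal K)v\|^2$ that restores it is exactly the paper's $z$.

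You also leave the kernel problem as an expectation. The paper actually solves it by reducing to the Goursat problem $\widetilde L_{\xi\eta}=\beta\gamma\widetilde L$ with $\widetilde L=\beta$ on both axes. This gives $L=\beta I_0\bigl(2\sqrt{\beta\gamma y(x-y)}\bigr)$ and $K=\beta^{-1}L_x$, with uniqueness proved by iteration.

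Once applied to the correct reduced equation $z_t=-\beta\gamma\int_0^xz\,\mathrm{d}y$, your Step 3 is essentially the paper's argument. That covers the energy identity, $H^1$ bounds for smooth data plus density and contractivity, the invariant set eliminated by orthogonality to polynomials, and the Arendt--Batty--Lyubich--V\~u backup.
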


\begin{corollary}\label{cor:arbitrary-speed}
Let $\alpha\neq0$ and assume that $\beta\gamma>0$. Consider the system
\[
\begin{cases}
u_t-\alpha u_x=\beta v,\\
v_t=\gamma u,
\end{cases}
\qquad (t,x)\in(0,\infty)\times(0,1).
\]
Then there exists an explicit backstepping control $U(t)$,
acting at the inflow boundary for $u$, namely,
\[
x=
\begin{cases}
1, & \text{ if }\alpha>0,\\
0, & \text{ if }\alpha<0,
\end{cases}
\]
such that, for every $(u_0,v_0)\in L^2(0,1)\times L^2(0,1)$, the corresponding
closed-loop problem admits a unique solution in the sense of distributions
\[
(u,v)\in C\bigl([0,\infty);L^2(0,1)\times L^2(0,1)\bigr),
\]
satisfying
\[
\lim_{t\to\infty}
\|(u(t,\cdot),v(t,\cdot))\|_{L^2(0,1)\times L^2(0,1)}=0.
\]
\end{corollary}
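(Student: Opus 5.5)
The plan is to deduce Corollary~\ref{cor:arbitrary-speed} from Theorem~\ref{thm:main} through a change of variables that normalizes the transport speed to $1$ and places the inflow boundary at $x=1$. The construction has two cases.

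\emph{Case $\alpha>0$.} Rescale time by $s=\alpha t$ and set $\tilde u(s,x)=u(s/\alpha,x)$ and $\tilde v(s,x)=v(s/\alpha,x)$. Then
\[
\tilde u_s-\tilde u_x=\tfrac{\beta}{\alpha}\tilde v,\qquad \tilde v_s=\tfrac{\gamma}{\alpha}\tilde u .
\]
This is system \eqref{eq:system} with $\tilde\beta=\beta/\alpha$ and $\tilde\gamma=\gamma/\alpha$. Since $\tilde\beta\tilde\gamma=\beta\gamma/\alpha^2>0$, Theorem~\ref{thm:main} applies and provides the feedback $\tilde U(s)$ given by \eqref{eq:explicit-feedback-law} with parameters $(\tilde\beta,\tilde\gamma)$. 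We then define $U(t)=\tilde U(\alpha t)$ at $x=1$.

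\emph{Case $\alpha<0$.} Combine the time rescaling $s=|\alpha|t$ with the reflection $y=1-x$, setting $\tilde u(s,y)=u(s/|\alpha|,1-y)$ and similarly for $\tilde v$. Because $\partial_y=-\partial_x$, the equation $u_t-\alpha u_x=\beta v$ becomes
\[
|\alpha|\tilde u_s-|\alpha|\tilde u_y=\beta\tilde v,\qquad\text{i.e.}\qquad \tilde u_s-\tilde u_y=\tfrac{\beta}{|\alpha|}\tilde v .
\]
Likewise $\tilde v_s=\tfrac{\gamma}{|\alpha|}\tilde u$. The new parameters again satisfy $\tilde\beta\tilde\gamma>0$. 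The inflow boundary $x=0$ corresponds to $y=1$, which is exactly where Theorem~\ref{thm:main} places its control. So we set $U(t)=\tilde U(|\alpha|t)$ and apply it at $x=0$. Since \eqref{eq:explicit-feedback-law} is explicit in the state, pulling it back gives an explicit integral feedback in $(u,v)$.

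\emph{Transfer of the conclusions.} The map $(u,v)\mapsto(\tilde u,\tilde v)$ is an isometry of $L^2(0,1)\times L^2(0,1)$ at corresponding times. It is also a bijection between distributional solutions, because the change of variables is a linear diffeomorphism of $(0,\infty)\times(0,1)$ that preserves test functions and acts on boundary traces compatibly. Therefore existence, uniqueness, continuity in time, and the decay
\[
\|(u(t),v(t))\|=\|(\tilde u(|\alpha|t),\tilde v(|\alpha|t))\|\to0
\]
all follow directly from Theorem~\ref{thm:main}.

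The only delicate point is this equivalence of the distributional (weak) formulations under the change of variables, in particular the boundary term at the controlled end. I would verify it by substituting transformed test functions into the weak formulation. This is a direct computation with no essential obstacle.
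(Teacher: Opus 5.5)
Your proposal is correct and follows the paper's proof essentially verbatim: the time rescaling $\tau=\alpha t$ when $\alpha>0$, and the rescaling $\tau=|\alpha|t$ combined with the reflection $\xi=1-x$ when $\alpha<0$, give a system with coefficients $\beta/|\alpha|$, $\gamma/|\alpha|$ of positive product, so Theorem~\ref{thm:main} applies with the control at the inflow end. You also note that the change of variables is an $L^2$ isometry and preserves distributional solutions; the paper leaves this implicit, and it is routine.
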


\begin{proof}
If $\alpha>0$, set
$\tau=\alpha t$.
Then the system becomes
$
\widetilde u_\tau-\widetilde u_x=\frac{\beta}{\alpha}\widetilde v,
$ $ 
\widetilde v_\tau=\frac{\gamma}{\alpha}\widetilde u,
$
and
$
\frac{\beta\gamma}{\alpha^2}>0.
$
Hence the conclusion follows from Theorem~\ref{thm:main}.

If $\alpha<0$, set
$\tau=-\alpha t$, $\xi=1-x.$
Then the system becomes
$
\widetilde u_\tau-\widetilde u_\xi=-\frac{\beta}{\alpha}\widetilde v,$ $
\widetilde v_\tau=-\frac{\gamma}{\alpha}\widetilde u,
$
with
$
\left(-\frac{\beta}{\alpha}\right)
\left(-\frac{\gamma}{\alpha}\right)
=\frac{\beta\gamma}{\alpha^2}>0.
$
Theorem~\ref{thm:main} therefore applies again, with the boundary
feedback acting at $x=0$.
\end{proof}

\begin{remark}
The resulting backstepping feedback
\eqref{eq:explicit-feedback-law} has a richer structure than
proportional control: rather than relying only on boundary values,
the boundary input depends on the state over the whole spatial domain through explicitly known integral kernels.
\end{remark}

\section{Backstepping design}

\subsection{Design of the target system}
The target system is designed so that it admits a convenient stability analysis while yielding kernel equations that are straightforward to solve.

Accordingly, we introduce the following target
system:
\begin{equation}\label{eq:target}
\left\{
\begin{aligned}
w_t-w_x&=0,
&& (t,x)\in(0,\infty)\times(0,1),\\
z_t&=\gamma w
-\beta\gamma\int_0^x z(t,y)\,\dd y,
&& (t,x)\in(0,\infty)\times(0,1),\\
w(t,1)&=0,
&& t>0,\\
w(0,x)&=w_0(x),\quad z(0,x)=z_0(x),
&& x\in(0,1).
\end{aligned}
\right.
\end{equation}
The well-posedness and asymptotic stability of the target problem are
established in Section~4 for initial data in $L^2(0,1)\times L^2(0,1)$.

\subsection{Backstepping transformation and kernel equations}

To map the original system \eqref{eq:system}--\eqref{eq:boundary} into
the target system \eqref{eq:target},
we seek a Volterra backstepping transformation of the form
\begin{equation}\label{eq:general-backstepping}
\begin{aligned}
w(t,x)
&=
u(t,x)
+\int_0^xK(x,y)u(t,y)\,\dd y
+\int_0^xL(x,y)v(t,y)\,\dd y,\\
z(t,x)
&=
v(t,x)
+\int_0^xM(x,y)u(t,y)\,\dd y
+\int_0^xN(x,y)v(t,y)\,\dd y,
\end{aligned}
\end{equation}
where $K,L,M,N$ are defined on
$\mathcal{T}:=\{(x,y):0\leq y\leq x\leq1\}$.

We first compute the dynamics of $w$.  Differentiating the first equation
in \eqref{eq:general-backstepping}, integrating by parts, and using the
original system and the transformation itself, we obtain
\begin{align*}
w_t-w_x
={}&
(\beta-L(x,x))z(t,x)
-K(x,0)w(t,0)
\\
&-\int_0^x
\bigl(K_x+K_y-\gamma L+(\beta-L(x,x))M\bigr)(x,y)
u(t,y)\,\dd y
\\
&-\int_0^x
(L_x-\beta K+(\beta-L(x,x))N)(x,y)
v(t,y)\,\dd y .
\end{align*}

Similarly, differentiating the second equation in
\eqref{eq:general-backstepping} gives
\begin{align*}
z_t
={}&
(\gamma+M(x,x))w(t,x)
-M(x,0)w(t,0)
\\
&+\int_0^x
\bigl(\gamma N-M_y-(\gamma+M(x,x))K\bigr)(x,y)
u(t,y)\,\dd y
\\
&+\int_0^x
\bigl(\beta M-(\gamma+M(x,x))L\bigr)(x,y)
v(t,y)\,\dd y .
\end{align*}

To recover the desired target system, we choose $M\equiv0$ and
$N\equiv K$.  Matching the $w$-equation with $w_t-w_x=0$ and the $z$-equation
with the target dynamics, except for the integral term,
yields the kernel equations
\begin{equation}\label{eq:kernel}
\left\{
\begin{aligned}
K_x(x,y)+K_y(x,y)-\gamma L(x,y)&=0,\\
L_x(x,y)-\beta K(x,y)&=0,
\end{aligned}
\right.
\qquad 0<y<x<1.
\end{equation}
together with the boundary conditions
\begin{equation}
\left\{
\begin{aligned}
K(x,0)&=0,
\\
L(x,x)&=\beta ,
\end{aligned}
\right.
\qquad 0\leq x\leq1 .
\label{eq:kernel-bc}
\end{equation}

Moreover, with this choice of $M$ and $N$, the equation for $z$ becomes
\[
z_t(t,x)
=
\gamma w(t,x)
-\gamma\int_0^xL(x,y)v(t,y)\,\dd y.
\]
The relations $L_x=\beta K$ and $L(y,y)=\beta$ imply that
\[
L(x,y)=\beta\left(1+\int_y^x K(s,y)\,\dd s\right).
\]
Hence, by changing the order
of integration and using the definition of $z$, we have
\[
\int_0^xL(x,y)v(t,y)\,\dd y
=
\beta\int_0^xz(t,y)\,\dd y.
\]
Thus, the transformed $z$-equation coincides with the second equation of
the target system \eqref{eq:target}.  The design therefore reduces to
solving \eqref{eq:kernel}--\eqref{eq:kernel-bc}.

\subsection{Solvability of kernel equations}

The characteristic domain $\mathcal T$ for the kernel equations is
illustrated in Figure~\ref{fig:kernel-characteristic-domain}.

\begin{figure}[htbp]
\centering
\includegraphics[width=0.72\textwidth]{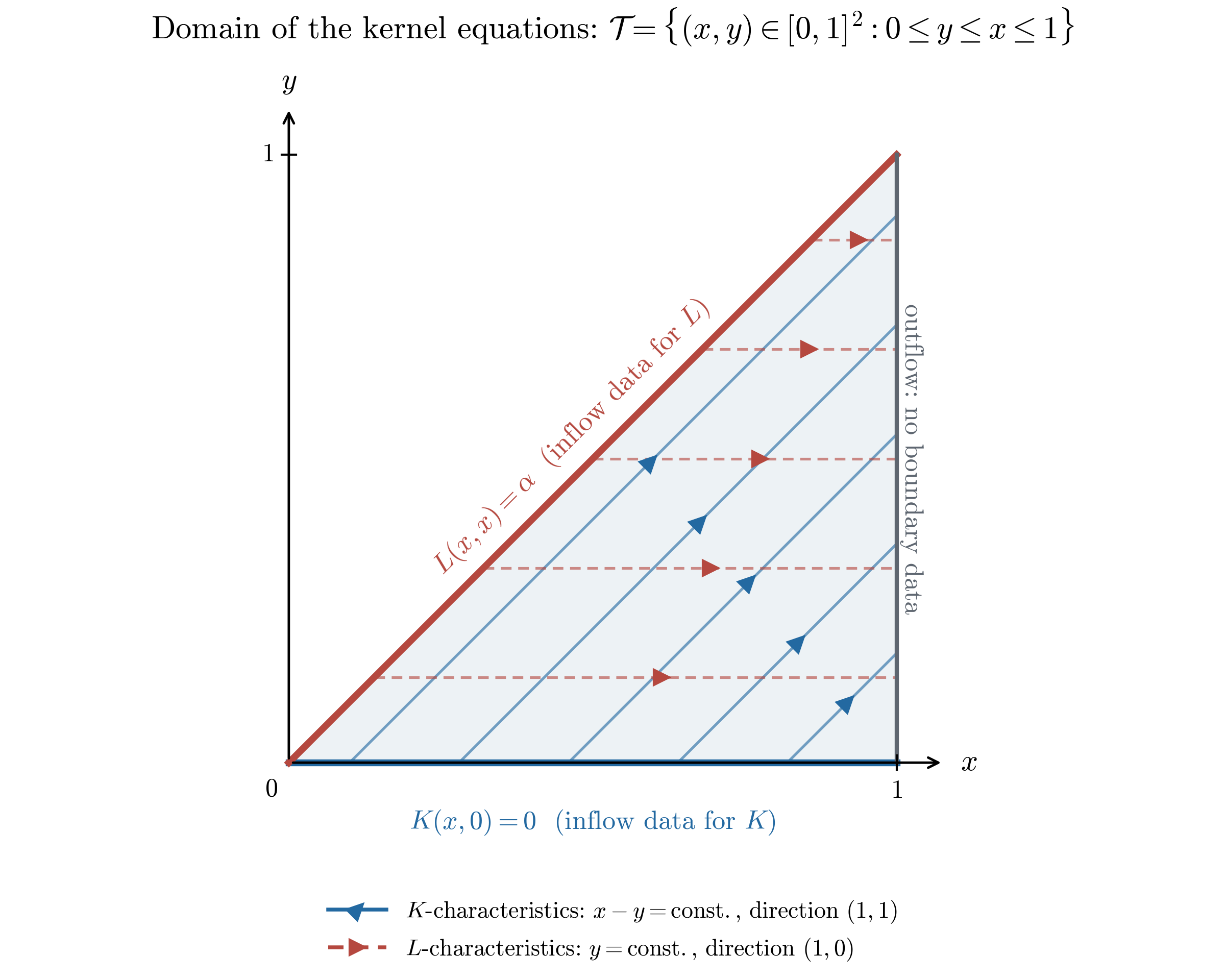}
\caption{Characteristic domain for the kernel equations.}
\label{fig:kernel-characteristic-domain}
\end{figure}

The following proposition provides an explicit solution to the kernel
system \eqref{eq:kernel}--\eqref{eq:kernel-bc}.

\begin{proposition}\label{prop:kernel-solution}
Assume that $\beta\gamma>0$.  The kernel system
\eqref{eq:kernel}--\eqref{eq:kernel-bc} admits a unique solution in
$\bigl(C^0(\mathcal T)\cap
C^\infty(\mathring{\mathcal T})\bigr)^2$, given by
\begin{equation}
\left\{
\begin{aligned}
K(x,y)
&=
\frac{\sqrt{\beta\gamma y}}{\sqrt{x-y}}
I_1
\left(
2\sqrt{\beta\gamma y(x-y)}
\right)
,
\\[2mm]
L(x,y)
&=
\beta
I_0
\left(
2\sqrt{\beta\gamma y(x-y)}
\right),
\end{aligned}
\right.
\label{eq:kernel-solution}
\end{equation}
where $I_0$ and $I_1$ denote the modified Bessel functions of the first
kind of orders zero and one, respectively; see, e.g., \cite{Wat44,NIST10}. The expression for $K$ is
understood through its continuous extension to $x=y$, for which
$K(x,x)=\beta\gamma x$.
\end{proposition}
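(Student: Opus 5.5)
Both existence and uniqueness become transparent once the first-order system \eqref{eq:kernel} is reduced to a single scalar Goursat problem for $L$. Set $\lambda:=\beta\gamma>0$. Since $L_x=\beta K$ by the second equation, we have $K=L_x/\beta$. Substituting into the first equation gives
\[
L_{xx}+L_{xy}-\lambda L=0 \quad\text{in } \mathring{\mathcal T}.
\]
The boundary data become $L(x,x)=\beta$ and $L_x(x,0)=\beta K(x,0)=0$. Differentiating $L(x,x)=\beta$ along the diagonal gives $L_x(x,x)+L_y(x,x)=0$, which is consistent with this.

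To normalize the operator I will use characteristic-type coordinates. The principal part $\partial_x(\partial_x+\partial_y)$ suggests $\xi=y$ and $\eta=x-y$. Then $\partial_x=\partial_\eta$ and $\partial_x+\partial_y=\partial_\xi$, so the equation becomes $\widetilde L_{\xi\eta}=\lambda\widetilde L$ on $\{\xi\ge0,\ \eta\ge0,\ \xi+\eta\le1\}$. The boundary data transform to $\widetilde L(\xi,0)=\beta$ on the diagonal $\eta=0$, and $\widetilde L_\eta(0,\eta)=0$ on $y=0$. The latter implies $\widetilde L(0,\eta)=\widetilde L(0,0)=\beta$. This is a standard Goursat problem with constant data $\beta$ on both characteristic axes, and its solution is $\beta I_0(2\sqrt{\lambda\xi\eta})$. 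One checks this directly: setting $s=\xi\eta$, the function $f(s)=I_0(2\sqrt{\lambda s})$ satisfies $sf''+f'=\lambda f$, which is exactly $\partial_\xi\partial_\eta f(\xi\eta)=\lambda f$. Returning to $(x,y)$ gives $L$ as in \eqref{eq:kernel-solution}. Then $K=L_x/\beta$, and $I_0'=I_1$ yields
\[
K=\frac{\lambda y}{\sqrt{\lambda y(x-y)}}\,I_1\bigl(2\sqrt{\lambda y(x-y)}\bigr),
\]
which is the stated formula. Using $I_1(z)\sim z/2$ as $z\to0$ gives the continuous extension $K(x,x)=\lambda x$. Smoothness in $\mathring{\mathcal T}$ follows because $I_0(2\sqrt{\cdot})$ and $I_1(2\sqrt{\cdot})/\sqrt{\cdot}$ are entire functions of their argument. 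Finally I verify the original system \eqref{eq:kernel}--\eqref{eq:kernel-bc} directly: $K(x,0)=0$ because of the factor $y$, and $L(x,x)=\beta I_0(0)=\beta$.

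For uniqueness, take the difference $(\delta K,\delta L)$ of two solutions in the stated class. It satisfies the homogeneous system with $\delta K(x,0)=0$ and $\delta L(x,x)=0$. I will write it in integral form along characteristics. The relation $L_x=\beta K$ integrated from the diagonal gives
\[
\delta L(x,y)=\beta\int_y^x\delta K(s,y)\,\dd s .
\]
The first equation integrated along the lines $x-y=\text{const}$ starting from $y=0$ gives
\[
\delta K(x,y)=\gamma\int_0^y\delta L(x-y+\sigma,\sigma)\,\dd\sigma .
\]
Combining these yields a Volterra-type fixed-point equation $\delta K=\mathcal A\,\delta K$ on $C^0(\mathcal T)$. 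The standard successive-approximation estimate $|\mathcal A^n\phi|\le C^n\|\phi\|_\infty\,x^n/n!$ then forces $\delta K=0$, and hence $\delta L=0$. The same iteration could also serve as an alternative existence proof.

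The main obstacle I expect is justifying this integral reformulation near the corner $(0,0)$ and along $y=0$ and $x=y$, given only $C^0(\mathcal T)\cap C^\infty(\mathring{\mathcal T})$ regularity. I will handle it by integrating along characteristics in the interior and then passing to the boundary limit using continuity. The remaining work is the routine check of the Bessel ODE identity.
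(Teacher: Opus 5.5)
Your proposal is correct and follows the paper's route. You reduce to the Goursat problem $\widetilde L_{\xi\eta}=\beta\gamma\widetilde L$ with constant data $\beta$ on both characteristic edges, take $L=\beta I_0\bigl(2\sqrt{\beta\gamma y(x-y)}\bigr)$ and $K=\beta^{-1}L_x$, and prove uniqueness by Picard iteration. The differences are cosmetic: you verify the Bessel solution directly instead of deriving it from the similarity ansatz and discarding $K_0$, and you iterate on $\delta K$ through the two first-order characteristic integrals, which in the variables $(x-y,y)$ is exactly the paper's operator $\beta\gamma\int_0^\xi\int_0^\eta$.
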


\begin{proof}
From the second
equation in \eqref{eq:kernel}, we have $K=\beta^{-1}L_x$.
Substituting this expression into the first equation gives
\begin{equation}
L_{xx}+L_{xy}-\beta\gamma L=0.
\label{eq:L-second-order}
\end{equation}

To rewrite \eqref{eq:L-second-order} in its canonical Goursat form, we
introduce $\xi=x-y$, $\eta=y$, and
$\widetilde L(\xi,\eta):=L(\xi+\eta,\eta)$.  Since
$\partial_x=\partial_\xi$ and
$\partial_y=-\partial_\xi+\partial_\eta$, we have
$L_{xx}+L_{xy}=(\partial_x+\partial_y)\partial_x L
=\partial_\eta\partial_\xi\widetilde L
=\widetilde L_{\xi\eta}$.  Consequently,
\eqref{eq:L-second-order} becomes
\begin{equation}
\widetilde L_{\xi\eta}=\beta\gamma\widetilde L.
\label{eq:L-Goursat}
\end{equation}
The boundary conditions \eqref{eq:kernel-bc} become
\[
\widetilde L(0,\eta)=\beta,
\qquad \widetilde L_\xi(\xi,0)=0.
\]
Since $\widetilde L(0,0)=\beta$, the second condition gives
$\widetilde L(\xi,0)=\beta$. Thus, the boundary conditions for the Goursat problem are
\[
\widetilde L(0,\eta)=\beta,
\qquad \widetilde L(\xi,0)=\beta,
\qquad 0\leq\xi,\eta\leq1.
\]

The reduction below follows the classical construction of the Riemann
function for constant-coefficient hyperbolic equations.  See
\cite[Chapter V, Section 5, pp.~458--459]{CH89}.

Equation~\eqref{eq:L-Goursat} is invariant under the scaling
$(\xi,\eta)\mapsto(\lambda\xi,\lambda^{-1}\eta)$ for $\lambda>0$,
which preserves the product $\xi\eta$. This motivates seeking a
solution depending only on this product.
Accordingly, we set $s=\xi\eta$ and seek a solution of the form
$\widetilde L(\xi,\eta)=\beta\phi(s)$.  Since
$s_\xi=\eta$ and $s_\eta=\xi$, we have
\[
\widetilde L_{\xi\eta}
=
\beta\bigl(\phi'(s)+s\phi''(s)\bigr).
\]
Moreover, $\widetilde L_\xi=\beta\eta\phi'(s)$, so the boundary
condition $\widetilde L_\xi(\xi,0)=0$ is automatically satisfied.
Substituting this expression directly into \eqref{eq:L-Goursat} and
dividing by $\beta\neq0$, we obtain the ordinary differential equation
\begin{equation}
s\phi''(s)+\phi'(s)-\beta\gamma\phi(s)=0.
\label{eq:phi-equation}
\end{equation}

Next, introduce $r=2\sqrt{\beta\gamma s}$ and let
$\psi(r)=\phi(s)$.  Using
$\dd r/\dd s=2\beta\gamma/r$, equation \eqref{eq:phi-equation} becomes
\[
r^2\psi''(r)
+r\psi'(r)
-r^2\psi(r)=0 .
\]

This is the modified Bessel equation of order zero; see, e.g.,
\cite{Wat44,NIST10}. Therefore,
\[
\psi(r)
=
c_1 I_0(r)+c_2 K_0(r),
\]
where $I_0$ and $K_0$ denote the modified Bessel functions of the
first and second kinds, respectively.

Since $K_0(r)$ is singular at $r=0$, the regularity of the kernel along
$\xi=0$ implies $c_2=0$.  The boundary condition
$\widetilde L(0,\eta)=\beta$ corresponds to $s=r=0$.  Since $I_0(0)=1$,
we obtain $c_1=1$.

Returning to the original variables $\xi=x-y$ and $\eta=y$, we obtain
\[
L(x,y)
=
\beta
I_0
\left(
2\sqrt{\beta\gamma y(x-y)}
\right).
\]

Finally, from $K(x,y)=\beta^{-1}L_x(x,y)$ and
$I_0'(r)=I_1(r)$, we obtain
\[
K(x,y)
=
\frac{\sqrt{\beta\gamma y}}{\sqrt{x-y}}
I_1
\left(
2\sqrt{\beta\gamma y(x-y)}
\right)
.
\]

It remains to verify the boundary conditions. Since $I_0(0)=1$, we have
$L(x,x)=\beta$.

Moreover, using the asymptotic property
\[
I_1(r)\sim \frac r2,\qquad \text{as } r\to0,
\]
and taking $r=2\sqrt{\beta\gamma y(x-y)}$, we obtain
\[
K(x,y)
\sim
\frac{\sqrt{\beta\gamma y}}{\sqrt{x-y}}
\sqrt{\beta\gamma y(x-y)}
=\beta\gamma y,
\qquad \text{as } x\to y+.
\]
Hence, $K$ admits a continuous extension to the diagonal by setting
\[
K(y,y)=\beta\gamma y,\qquad y>0.
\]
On the other hand, the explicit formula gives $K(x,0)=0$ for $x>0$.
The two extensions agree at the corner $(0,0)$, where $K(0,0)=0$.

Therefore, the obtained kernels satisfy the kernel equations
\eqref{eq:kernel} with the boundary conditions \eqref{eq:kernel-bc}.

Moreover, the analyticity of $I_0$ and $I_1$
implies that $K,L\in C^\infty(\mathring{\mathcal T})$. 

It remains to prove that the solution to
\eqref{eq:kernel}--\eqref{eq:kernel-bc} is unique.
Let $(K_1,L_1)$ and $(K_2,L_2)$ be two solutions, and set
$\widehat K=K_1-K_2$ and $\widehat L=L_1-L_2$.
Then $(\widehat K,\widehat L)$ satisfies \eqref{eq:kernel} with the
homogeneous boundary conditions
$\widehat K(x,0)=0$ and $\widehat L(x,x)=0$.

Applying the same change of variables as above gives
\[
\widetilde{\widehat L}_{\xi\eta}
=\beta\gamma\widetilde{\widehat L},
\qquad
\widetilde{\widehat L}(0,\eta)=0.
\]
Moreover, since $\widehat L_x=\beta\widehat K$ in the interior and
$\widehat K\in C(\mathcal T)$, the relation extends continuously to
the boundary. Hence
\[
\partial_\xi\widetilde{\widehat L}(\xi,0)
=\beta\widehat K(\xi,0)=0.
\]
Since $\widetilde{\widehat L}(0,0)=0$, the last condition implies
$\widetilde{\widehat L}(\xi,0)=0$. Hence,
\[
\widetilde{\widehat L}(\xi,\eta)
=
\beta\gamma
\int_0^\xi\int_0^\eta
\widetilde{\widehat L}(p,q)\,\mathrm dq\,\mathrm dp.
\]
Iterating this identity yields
\[
|\widetilde{\widehat L}(\xi,\eta)|
\le
\|\widetilde{\widehat L}\|_\infty
\frac{(|\beta\gamma|\xi\eta)^n}{(n!)^2},
\qquad n\ge1.
\]
Letting $n\to\infty$ gives
$\widetilde{\widehat L}\equiv0$, and hence $\widehat L=0$.
Finally, $\widehat K=\beta^{-1}\widehat L_x=0$.
Thus the solution is unique, which concludes the proof of Proposition~\ref{prop:kernel-solution}.
\end{proof}

\subsection{Norm equivalence and explicit feedback law}

With $M\equiv0$ and $N=K$, the general transformation
\eqref{eq:general-backstepping} becomes
\begin{equation}\label{eq:backstepping-transformation}
\begin{aligned}
w(t,x)
&=
u(t,x)
+\int_0^xK(x,y)u(t,y)\,\dd y
+\int_0^xL(x,y)v(t,y)\,\dd y,\\
z(t,x)
&=
v(t,x)
+\int_0^xK(x,y)v(t,y)\,\dd y,
\end{aligned}
\end{equation}
where $K$ and $L$ are given explicitly by
Proposition~\ref{prop:kernel-solution}.

\begin{proposition}\label{prop:transformation}
The transformation \eqref{eq:backstepping-transformation} defines a
bounded linear isomorphism on $L^2(0,1)\times L^2(0,1)$.  Consequently,
there exists a constant $C>0$, independent of $t$, such that
\[
C^{-1}\|(u,v)(t,\cdot)\|_{L^2\times L^2}
\leq \|(w,z)(t,\cdot)\|_{L^2\times L^2}
\leq C\|(u,v)(t,\cdot)\|_{L^2\times L^2}.
\]
\end{proposition}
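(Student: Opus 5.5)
The plan is to write the transformation \eqref{eq:backstepping-transformation} in block form $(w,z)=(I+\mathcal K)(u,v)$, where
\[
\mathcal K\begin{pmatrix}u\\ v\end{pmatrix}(x)=\int_0^x\begin{pmatrix}K(x,y)&L(x,y)\\ 0&K(x,y)\end{pmatrix}\begin{pmatrix}u(y)\\ v(y)\end{pmatrix}\dd y .
\]
This is a Volterra operator with a matrix kernel. By Proposition~\ref{prop:kernel-solution}, $K$ and $L$ are continuous on the compact triangle $\mathcal T$, so the kernel is bounded by some $\kappa:=\max_{\mathcal T}(|K|+|L|)<\infty$. Boundedness of $I+\mathcal K$ on $L^2\times L^2$ then follows from Cauchy--Schwarz: $\|\mathcal K\|\le\kappa$, since $\int_0^1\int_0^x|k(x,y)|^2\,\dd y\,\dd x\le\kappa^2$. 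This is a Hilbert--Schmidt estimate, so $\mathcal K$ is in fact compact.

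For invertibility I would use the Volterra structure. The standard iteration bound gives, for the $n$-th power,
\[
|k_n(x,y)|\le \kappa^n\frac{(x-y)^{n-1}}{(n-1)!}.
\]
Hence $\|\mathcal K^n\|\le \kappa^n/(n-1)!$, and the Neumann series $\sum_{n\ge0}(-\mathcal K)^n$ converges in operator norm. This shows that $I+\mathcal K$ is invertible with bounded inverse. Moreover, the inverse has the form $I+\mathcal R$, where $\mathcal R$ is again an upper-triangular Volterra operator with continuous kernel; this is convenient but not needed for the norm equivalence. Alternatively, one can argue that $\mathcal K$ is compact and quasinilpotent (spectrum $\{0\}$), so $-1\notin\sigma(\mathcal K)$ and $I+\mathcal K$ is invertible by the Fredholm alternative. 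I would present the Neumann-series argument because it is elementary and explicit.

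The norm equivalence then holds with $C=\max\{\|I+\mathcal K\|,\|(I+\mathcal K)^{-1}\|\}$, and this constant is independent of $t$ since the kernels do not depend on $t$. The only step needing care is the iterated kernel estimate for the matrix-valued kernel. One checks by induction that the composition of two Volterra kernels bounded by $\kappa^a(x-y)^{a-1}/(a-1)!$ and $\kappa$ produces the stated bound, using $\int_y^x (s-y)^{n-1}\,\dd s=(x-y)^n/n$ and submultiplicativity of the matrix norm. I do not expect a real obstacle: continuity of $K$ up to the diagonal, including the corner $(0,0)$, is already provided by Proposition~\ref{prop:kernel-solution}. If one wanted explicit inverse kernels, the triangular structure would let one invert the $z$-equation first (a scalar Volterra equation with kernel $K$) and then the $w$-equation; I would mention this only as a remark.
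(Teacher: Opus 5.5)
Your proposal is correct and follows the same route as the paper: boundedness from the continuity of $K$ and $L$ on $\mathcal T$, and invertibility from the Volterra structure, which the paper simply attributes to the standard theory of Volterra integral equations of the second kind. Your Neumann-series argument with the iterated bound $\kappa^n(x-y)^{n-1}/(n-1)!$ is exactly that standard theory written out, so it fills in detail rather than changing the approach.
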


\begin{proof}
Since $K,L\in C^0(\mathcal T)$, the transformation
\eqref{eq:backstepping-transformation} is bounded on $L^2(0,1)\times L^2(0,1)$.
By the standard theory of Volterra integral equations of the second kind,
it is invertible, and its inverse is again a bounded Volterra integral
transformation with continuous kernels. Hence, the transformation is a
bounded linear isomorphism on $L^2(0,1)\times L^2(0,1)$, and the asserted norm
equivalence follows.
\end{proof}

Evaluating the first equation of
\eqref{eq:backstepping-transformation}
at \(x=1\), and using $u(t,1)=U(t)$ and $w(t,1)=0$, we obtain
\[
U(t)
=
-
\int_0^1 K(1,y)u(t,y)\,dy
-
\int_0^1 L(1,y)v(t,y)\,dy.
\]
Substituting the explicit expressions of the kernels gives
\begin{align}
U(t)
={}&
-\sqrt{\beta\gamma}
\int_0^1
\sqrt{\frac{y}{1-y}}\,
I_1\!\left(
2\sqrt{\beta\gamma\,y(1-y)}
\right)u(t,y)\,dy
\nonumber\\
&\quad
-\beta
\int_0^1
I_0\!\left(
2\sqrt{\beta\gamma\,y(1-y)}
\right)v(t,y)\,dy.
\label{eq:explicit-feedback-law}
\end{align}
The apparent singularity at $y=1$ is removable, with
$K(1,1)=\beta\gamma$. Since both kernels are continuous,
this formula defines a bounded linear full-state feedback
on $L^2(0,1)\times L^2(0,1)$.

\section{Stability of the target system and proof of Theorem~\ref{thm:main}}
This section is organized as follows. In
Subsection~\ref{subsec:L2-wellposedness}, we establish the well-posedness
of the target system for $L^2$ initial data. In
Subsection~\ref{subsec:L2-asymptotic-stability}, we prove its asymptotic
stability by reducing the long-time dynamics to the zero-speed component
and applying the Lyapunov--LaSalle method. In
Subsection~\ref{subsec:proof-main-theorem}, we complete the proof of
Theorem~\ref{thm:main} by transferring the well-posedness and stability
results back to the original closed-loop system. Finally, in
Subsection~\ref{subsec:alternative-strong-stability}, we provide an
alternative proof of the stability of the reduced system based on the
Arendt--Batty--Lyubich--V\~{u} theorem.

Throughout this section, we assume that $\beta\gamma>0$.

\label{sec:L2-zero-input}

\subsection{\texorpdfstring{$L^2$}{L2} well-posedness}
\label{subsec:L2-wellposedness}

We first establish the well-posedness of the target system for
$L^2$ initial data.  We use the notion of solution in the sense of
distributions adopted by Hu and Wang in
\cite[Definition~2.1]{HW16}.

\begin{proposition}\label{prop:L2-wellposed-W0}
For every $(w_0,z_0)\in L^2(0,1)\times L^2(0,1)$, the target system
\begin{equation}\label{eq:L2-target}
\left\{
\begin{aligned}
w_t-w_x&=0,
&& (t,x)\in(0,\infty)\times(0,1),\\
z_t&=\gamma w
-\beta\gamma\int_0^x z(t,y)\,\dd y,
&& (t,x)\in(0,\infty)\times(0,1),\\
w(t,1)&=0,
&& t>0,\\
w(0,x)&=w_0(x),\quad z(0,x)=z_0(x),
&& x\in(0,1).
\end{aligned}
\right.
\end{equation}
admits a unique solution in the sense of distributions satisfying
\[
w\in C([0,\infty);L^2(0,1)),\quad
z\in C^1([0,\infty);L^2(0,1)).
\]
\end{proposition}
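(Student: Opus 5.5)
The system is triangular: $w$ solves a pure transport equation independent of $z$, and $z$ then solves a linear ODE in $L^2(0,1)$ with a bounded operator and a source term built from $w$. I would exploit this in two steps, first solving for $w$ explicitly and then treating the $z$-equation by the variation of constants formula.

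\textbf{Step 1: the $w$-component.} For $w_0\in L^2(0,1)$, set
\[
w(t,x)=w_0(x+t)\ \text{if } x+t<1,\qquad w(t,x)=0\ \text{if } x+t\ge 1 .
\]
This is the left-shift semigroup on $L^2(0,1)$, so $w\in C([0,\infty);L^2(0,1))$ by continuity of translations in $L^2$. Moreover $w(t,\cdot)\equiv0$ for $t\ge1$. Testing against smooth functions vanishing near $t=0$ and $x=0$ shows that $w$ satisfies the transport equation, the boundary condition and the initial condition in the sense of \cite[Definition~2.1]{HW16}. For uniqueness of $w$, I would take the difference of two solutions; it is a distributional solution with zero data. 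A duality argument with the adjoint transport problem, or the characteristic formula applied to mollified solutions, shows that it vanishes. This is the standard treatment of the transport part in \cite{HW16}.

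\textbf{Step 2: the $z$-component.} Define the Volterra operator
\[
(\mathcal V z)(x)=-\beta\gamma\int_0^x z(y)\,\dd y .
\]
It is bounded on $L^2(0,1)$ with norm at most $|\beta\gamma|$. The $z$-equation becomes the abstract ODE $z'=\mathcal V z+\gamma w(t)$ in $L^2(0,1)$, with forcing $\gamma w\in C([0,\infty);L^2(0,1))$. Its unique mild solution is
\[
z(t)=e^{t\mathcal V}z_0+\gamma\int_0^t e^{(t-s)\mathcal V}w(s)\,\dd s .
\]
Here $e^{t\mathcal V}$ is defined by the norm-convergent exponential series. Because $\mathcal V$ is bounded and the forcing is continuous, this mild solution is a classical solution, $z\in C^1([0,\infty);L^2(0,1))$, and its derivative equals $\mathcal V z+\gamma w$ in $L^2$ for every $t$. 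Uniqueness holds within the class of distributional solutions. Indeed, for a difference $\delta z$, the equation $\delta z_t=\mathcal V\delta z$ holds in $\mathcal D'$ with $\delta z\in C([0,\infty);L^2)$. Since $\mathcal V\delta z$ is continuous in time, $\delta z$ is $C^1$. Gr\"onwall's inequality applied to $\|\delta z(t)\|$ then gives $\delta z=0$.

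\textbf{Main obstacle.} The only delicate point is reconciling the distributional definition of \cite{HW16} with these semigroup or abstract-ODE solutions. One must check that the weak formulation, with test functions carrying the boundary and initial terms, characterizes exactly the mild solutions. For uniqueness of $w$ I would regularize: convolve in $x$ after extending by zero beyond $x=1$, or use the adjoint problem $\varphi_t-\varphi_x=g$ with $\varphi(t,0)=0$, which is solvable for smooth $g$. Either route gives $\int\!\!\int w\,g=0$ for all such $g$, hence $w=0$. The $z$-part poses no difficulty because $\mathcal V$ is bounded.
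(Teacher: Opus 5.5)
Your proposal is correct and follows essentially the same route as the paper: you solve for $w$ explicitly by characteristics, then treat the $z$-equation as an ODE in $L^2(0,1)$ with the bounded Volterra operator and continuous forcing $\gamma w$. You use Duhamel's formula where the paper cites the Banach-space existence and uniqueness theorem, and you give more detail on uniqueness among distributional solutions (duality for $w$, Gr\"onwall for $z$), which the paper only asserts.
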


\begin{proof}
We first consider the transport equation $w_t-w_x=0$ with the boundary
condition $w(t,1)=0$.  By the method of characteristics, its solution is
given by
\begin{equation}\label{eq:W0-characteristics}
w(t,x)=
\begin{cases}
w_0(x+t),&x+t\leq1,\\
0,&x+t>1.
\end{cases}
\end{equation}
We have
\[
w\in C([0,\infty);L^2(0,1)).
\]

The characteristic formula is the unique solution of the transport equation in the sense of distributions.

Next, we consider the equation for $z$.  Define the classical Volterra
integration operator by
\[
(\mathcal V\varphi)(x):=\int_0^x\varphi(y)\,\dd y,
\qquad \varphi\in L^2(0,1).
\]
For every $\varphi\in L^2(0,1)$, the Cauchy--Schwarz inequality gives
\[
|(\mathcal V\varphi)(x)|^2
\leq x\int_0^x|\varphi(y)|^2\,\dd y.
\]
After integration in $x$, we obtain
\[
\|\mathcal V\varphi\|_{L^2(0,1)}
\leq\frac1{\sqrt2}\|\varphi\|_{L^2(0,1)}.
\]
Thus, $\mathcal V:L^2(0,1)\to L^2(0,1)$ is a bounded linear operator.

The second equation of \eqref{eq:L2-target} can now be viewed as
an ordinary differential equation in the Banach space $L^2(0,1)$:
\[
z_t(t)=\gamma w(t)-\beta\gamma\mathcal Vz(t),
\qquad z(0)=z_0.
\]
Since $w\in C([0,\infty);L^2(0,1))$ and $\mathcal V$ is bounded on
$L^2(0,1)$, the right-hand side is continuous in $t$ and globally
Lipschitz with respect to $z$.  The classical existence and uniqueness
theorem for ordinary differential equations in Banach spaces therefore
yields $z\in C^1([0,\infty);L^2(0,1))$.  The Banach-space equation for
$z$ holds in
the distributional sense, and its uniqueness gives the uniqueness of the
distributional solution.
\end{proof}

\subsection{\texorpdfstring{$L^2$}{L2} asymptotic stability}
\label{subsec:L2-asymptotic-stability}

In this subsection, we establish the asymptotic stability of the target
system in $L^2(0,1)\times L^2(0,1)$. The main result is stated below.

\begin{theorem}\label{thm:L2-target-W0-stability}
Let $(w,z)$ be the solution of the target system
\eqref{eq:L2-target} associated with initial data
$(w_0,z_0)\in L^2(0,1)\times L^2(0,1)$. Then
\[
\lim_{t\to\infty}
\|(w(t,\cdot),z(t,\cdot))\|_{L^2(0,1)\times L^2(0,1)}
=0.
\]
\end{theorem}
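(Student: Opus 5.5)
The argument splits into a finite-time transient for $w$ followed by an autonomous problem for $z$. By the characteristic formula \eqref{eq:W0-characteristics}, $w(t,\cdot)\equiv0$ for all $t\ge1$, so $\|w(t)\|_{L^2}\to0$ trivially. For $t\ge1$ the $z$-equation reduces to the autonomous linear equation
\[
z_t=-c\,\mathcal V z,\qquad c:=\beta\gamma>0,
\]
with initial state $z(1)\in L^2(0,1)$ given by Proposition~\ref{prop:L2-wellposed-W0}. Thus $z(t)=e^{-c(t-1)\mathcal V}z(1)$ for $t\ge1$, and it suffices to show that the semigroup $S(t)=e^{-ct\mathcal V}$, which is uniformly continuous since its generator is bounded, is strongly stable on $L^2(0,1)$. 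I work with real-valued functions, complexifying when spectral arguments are needed.

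The first step is dissipativity. For $\varphi\in L^2(0,1)$, writing $\Phi=\mathcal V\varphi$, we have
\[
\langle \mathcal V\varphi,\varphi\rangle=\int_0^1\Phi\Phi'\,\dd x=\tfrac12\Bigl|\int_0^1\varphi\,\dd x\Bigr|^2\ge0.
\]
Hence
\[
\frac{\dd}{\dd t}\|z\|^2=-c\Bigl|\int_0^1 z\,\dd x\Bigr|^2\le0 .
\]
So $-c\mathcal V$ is dissipative, $S(t)$ is a contraction semigroup, and $\|z(t)\|$ is nonincreasing. This already gives boundedness; the issue is convergence to zero.

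For convergence I would use the Arendt--Batty--Lyubich--V\~{u} theorem as the main route. Since $\mathcal V$ is compact and quasinilpotent (its $n$-th power has kernel $(x-y)^{n-1}/(n-1)!$, so $\|\mathcal V^n\|\le 1/(n-1)!$), the spectrum of the generator is $\sigma(-c\mathcal V)=\{0\}$. This intersects $i\mathbb R$ in a countable set. It remains to check that the adjoint generator $-c\mathcal V^*$, where $(\mathcal V^*\varphi)(x)=\int_x^1\varphi(y)\,\dd y$, has no eigenvalues on $i\mathbb R$. The only candidate is $0$, and $\mathcal V^*\varphi=0$ forces $\varphi=0$ after differentiation. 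ABLV then yields $S(t)z\to0$ for every $z\in L^2$, and hence $\|z(t)\|\to0$.

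As a Lyapunov--LaSalle alternative, I would use the invariant set where $\tfrac{\dd}{\dd t}\|z\|^2\equiv0$, that is, $\int_0^1 S(t)z\,\dd x=0$ for all $t$. Differentiating repeatedly in $t$ at $t=0$ gives $\int_0^1\mathcal V^k z\,\dd x=0$ for all $k$. Equivalently, $\int_0^1(1-y)^k z(y)\,\dd y=0$ for all $k$, so $z=0$ by density of polynomials. The main obstacle on this route is precompactness of orbits in $L^2$, which is not automatic in infinite dimensions. I would handle it by first taking $z_0\in\mathcal V(L^2)\subset H^1$. Because $\mathcal V$ commutes with $S(t)$, the orbit of such data equals $\mathcal V$ applied to a bounded orbit, and is therefore precompact by compactness of $\mathcal V$. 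Since $\mathcal V(L^2)$ is dense and the semigroup is contractive, a standard $\varepsilon/3$ argument extends the convergence to all $z_0\in L^2$.
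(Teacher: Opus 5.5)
Your proposal is correct and follows the paper's own route: $w$ vanishes for $t\ge 1$, and strong stability of $e^{-\beta\gamma t\mathcal V}$ on $L^2(0,1)$ is then obtained both via the Arendt--Batty--Lyubich--V\~{u} theorem (the paper's alternative proof) and via Lyapunov--LaSalle on the dense subspace $\mathcal H^1_{(0)}(0,1)=\mathcal V(L^2)$, followed by density and contractivity (the paper's main proof). The differences are cosmetic. You check the point-spectrum condition for the adjoint rather than for $A$, which is equivalent on this reflexive space. You also get precompactness from $S(t)\mathcal V=\mathcal V S(t)$ and the compactness of $\mathcal V$, instead of from the paper's estimate $\frac{\dd}{\dd t}\frac12\|z_x\|^2=-\frac{\beta\gamma}{2}z(t,1)^2$; this is the same bound, since $z_x=S(t)\psi$ when $z_0=\mathcal V\psi$.
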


\begin{proof}
The proof consists of two steps.

\textit{Step 1: Finite-time extinction of the transport component.}

Since $w(t,1)=0$, the characteristics of $w_t-w_x=0$ give
\[
w(t,x)=
\begin{cases}
w_0(x+t),& x+t\leq1,\\[1mm]
0,& x+t>1.
\end{cases}
\]
Hence
\[
w(t,\cdot)\equiv0,
\qquad t\geq1.
\]

\textit{Step 2: Stability of the reduced system.}

For $t\geq1$, the dynamics of $z$ reduce to
\[
z_t(t,x)
=
-\beta\gamma\int_0^x z(t,y)\,\dd y.
\]
By Proposition~\ref{prop:L2-reduced-stability}, which will be stated
and proved below,
\[
\|z(t,\cdot)\|_{L^2(0,1)}
\longrightarrow 0
\qquad\text{as }t\to\infty.
\]
Together with $w(t,\cdot)\equiv0$ for $t\geq1$, this proves the desired
stability of the target system.
\end{proof}

To analyze the stability of the reduced system for $z$, we first
establish the well-posedness of the corresponding Cauchy problem.

For convenience of presentation, we shift the reduced system in time
by one unit and take
\[
\widetilde z_0:=z(1,\cdot)\in L^2(0,1)
\]
as the initial state for the following discussion. After this time
translation, we keep the notation $z(t,\cdot)$ for the translated
trajectory.

In addition to $L^2(0,1)$, we also establish well-posedness in $\mathcal H_{(0)}^1(0,1)$, which will be needed for the smooth approximation argument below.
\begin{proposition}\label{prop:L2-reduced-wellposedness}
Let
$
X=L^2(0,1)
$ or $
X=\mathcal H_{(0)}^1(0,1)
:=
\{\phi\in H^1(0,1):\phi(0)=0\}.
$
For any initial condition $\widetilde z_0\in X$, the system
\begin{equation}
\begin{cases}
z_t(t,x)
=
-\beta\gamma
\displaystyle\int_0^x z(t,y)\,\dd y,
\\[2mm]
z(0,x)=\widetilde z_0(x),
\end{cases}
\label{eq:L2-reduced-system}
\end{equation}
admits a unique solution satisfying
\[
z\in C^\infty([0,\infty);X).
\]
\end{proposition}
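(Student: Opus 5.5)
The plan is to treat \eqref{eq:L2-reduced-system} as a linear ODE $z_t=A z$ in the Banach space $X$, with $A:=-\beta\gamma\mathcal V$. The key point is that $A$ is a bounded operator on $X$. Once that is known, the solution is given by the uniformly convergent exponential series
\[
z(t)=e^{tA}\widetilde z_0=\sum_{n\ge0}\frac{t^nA^n}{n!}\widetilde z_0 .
\]
This series is an entire function of $t$ with values in $X$. In particular $z\in C^\infty([0,\infty);X)$, with $\partial_t^k z=A^k e^{tA}\widetilde z_0$.

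\textbf{Boundedness of $\mathcal V$ on each space.} For $X=L^2(0,1)$ this was already shown in the proof of Proposition~\ref{prop:L2-wellposed-W0}, where $\|\mathcal V\|\le 1/\sqrt2$. For $X=\mathcal H^1_{(0)}(0,1)$, equipped with the $H^1$ norm, take $\varphi\in\mathcal H^1_{(0)}$. Then $\mathcal V\varphi$ is absolutely continuous, $(\mathcal V\varphi)'=\varphi\in L^2$, and $(\mathcal V\varphi)(0)=0$. Hence $\mathcal V\varphi\in\mathcal H^1_{(0)}$, with
\[
\|\mathcal V\varphi\|_{H^1}^2=\|\mathcal V\varphi\|_{L^2}^2+\|\varphi\|_{L^2}^2\le \tfrac32\|\varphi\|_{L^2}^2\le\tfrac32\|\varphi\|_{H^1}^2 .
\]
We also note that $\mathcal H^1_{(0)}$ is a closed subspace of $H^1$, because point evaluation is continuous on $H^1(0,1)$. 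So $X$ is a Banach (indeed Hilbert) space in both cases.

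\textbf{Uniqueness.} Let $z_1,z_2\in C^1([0,\infty);X)$ be two solutions, and set $d=z_1-z_2$. Then
\[
\|d(t)\|_X\le\|A\|\int_0^t\|d(s)\|_X\,\dd s .
\]
Gronwall's lemma gives $d\equiv0$. Equivalently, we may invoke the Picard--Lindel\"of theorem for globally Lipschitz ODEs in Banach spaces, as was done in Proposition~\ref{prop:L2-wellposed-W0}.

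\textbf{Consistency between the two settings.} Uniqueness in $L^2$ implies that for $\widetilde z_0\in\mathcal H^1_{(0)}$ the two solutions coincide. Indeed, the $\mathcal H^1_{(0)}$-solution is also an $L^2$-solution, because the embedding $\mathcal H^1_{(0)}\hookrightarrow L^2$ is continuous. This is presumably what the later smooth approximation argument will need.

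I expect no real obstacle. The only points needing care are checking that $\mathcal V$ preserves the boundary condition $\phi(0)=0$, and noting that smoothness in time is automatic for the exponential of a bounded operator.
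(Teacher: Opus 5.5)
Your proof is correct and follows the same route as the paper. Boundedness of $\mathcal V$ on both $L^2(0,1)$ and $\mathcal H^1_{(0)}(0,1)$ makes $-\beta\gamma\mathcal V$ the generator of the uniformly continuous semigroup $e^{-\beta\gamma t\mathcal V}$, which yields existence, uniqueness and $C^\infty$ regularity in time; your explicit bound $\|\mathcal V\varphi\|_{H^1}^2\le\tfrac32\|\varphi\|_{H^1}^2$, the Gronwall uniqueness step, and the remark that the $\mathcal H^1_{(0)}$-solution coincides with the $L^2$-solution fill in details the paper leaves implicit.
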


\begin{proof}
Let $\mathcal V$ denote the Volterra operator
\[
(\mathcal V\phi)(x)
=
\int_0^x\phi(y)\,\dd y.
\]
The operator $\mathcal V$ is bounded on both $L^2(0,1)$ and
$\mathcal H_{(0)}^1(0,1)$. Hence $-\beta\gamma\mathcal V$ generates
a uniformly continuous $C_0$-semigroup on either space. Therefore,
for every $\widetilde z_0\in X$, the reduced system admits a unique
solution
\[
z(t)=e^{-\beta\gamma t\mathcal V}\widetilde z_0,
\]
which satisfies
\[
z\in C^\infty([0,\infty);X).
\]
\end{proof}

\begin{proposition}\label{prop:L2-reduced-stability}
For every initial condition $\widetilde z_0\in L^2(0,1)$, the unique
solution of \eqref{eq:L2-reduced-system} satisfies
\[
\lim_{t\rightarrow\infty}
\|z(t,\cdot)\|_{L^2(0,1)}
=0.
\]
\end{proposition}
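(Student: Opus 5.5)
The plan is to apply the Lyapunov--LaSalle method to the reduced system \eqref{eq:L2-reduced-system}, written as $z_t=-c\,\mathcal V z$ with $c:=\beta\gamma>0$. The argument has three parts: a dissipation identity, a compactness argument on smooth data, and a density argument to reach all of $L^2(0,1)$.

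\textbf{Dissipation identities.} Set $Z(t,x):=(\mathcal V z(t))(x)=\int_0^x z(t,y)\,\dd y$, so that $Z_x=z$ and $Z(t,0)=0$. By Proposition~\ref{prop:L2-reduced-wellposedness}, $z$ is smooth in time with values in $L^2(0,1)$. Hence
\[
\frac{\dd}{\dd t}\|z(t)\|_{L^2}^2=-2c\int_0^1 Z_x Z\,\dd x=-c\,Z(t,1)^2=-c\Bigl(\int_0^1 z(t,y)\,\dd y\Bigr)^2\le0 .
\]
So $e^{-ct\mathcal V}$ is a contraction semigroup on $L^2(0,1)$.

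For $\widetilde z_0\in\mathcal H^1_{(0)}(0,1)$, the solution stays in $\mathcal H^1_{(0)}$ by Proposition~\ref{prop:L2-reduced-wellposedness}. Differentiating the equation in $x$ gives $z_{xt}=-c\,z$, and therefore
\[
\frac{\dd}{\dd t}\|z_x(t)\|_{L^2}^2=-2c\int_0^1 z_x z\,\dd x=-c\,z(t,1)^2\le0 .
\]
Thus $\{z(t)\}_{t\ge0}$ is bounded in $H^1(0,1)$, hence precompact in $L^2(0,1)$ by Rellich's theorem.

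\textbf{LaSalle step (main obstacle).} For such smooth data, the $\omega$-limit set $\omega(\widetilde z_0)$ in $L^2$ is nonempty, compact and invariant. Since $\|z(t)\|_{L^2}$ decreases to some limit $\ell$, the norm equals $\ell$ on the whole of $\omega(\widetilde z_0)$. Take $\zeta_0\in\omega(\widetilde z_0)$ and let $\zeta(t)$ be the solution starting from it; then $\frac{\dd}{\dd t}\|\zeta\|^2=0$, so $\int_0^1\zeta(t,y)\,\dd y=0$ for all $t\ge0$. The key point, and the step I expect to require the most care, is showing that this forces $\zeta_0=0$. I would use the analyticity of $t\mapsto e^{-ct\mathcal V}\zeta_0$ (since $\mathcal V$ is bounded) and differentiate $n$ times at $t=0$:
\[
0=\langle (-c)^n\mathcal V^n\zeta_0,1\rangle=(-c)^n\langle\zeta_0,(\mathcal V^*)^n 1\rangle ,
\]
where $(\mathcal V^*)^n1(y)=(1-y)^n/n!$. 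Hence $\zeta_0$ is orthogonal to every polynomial, and the density of polynomials in $L^2(0,1)$ gives $\zeta_0=0$. Therefore $\omega(\widetilde z_0)=\{0\}$, so $\ell=0$, and precompactness yields $z(t)\to0$ in $L^2$.

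\textbf{Density argument.} For general $\widetilde z_0\in L^2(0,1)$ and any $\varepsilon>0$, choose $\phi\in\mathcal H^1_{(0)}(0,1)$ with $\|\widetilde z_0-\phi\|_{L^2}<\varepsilon$; this is possible since $C_c^\infty(0,1)$ is dense. By the contraction property,
\[
\|z(t)\|_{L^2}\le\|e^{-ct\mathcal V}\phi\|_{L^2}+\varepsilon .
\]
The first term tends to $0$ by the previous step, so $\limsup_{t\to\infty}\|z(t)\|_{L^2}\le\varepsilon$. Since $\varepsilon>0$ is arbitrary, this proves Proposition~\ref{prop:L2-reduced-stability}.
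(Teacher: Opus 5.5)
Your proposal is correct and follows essentially the same argument as the paper: the same $L^2$ and $\mathcal H^1_{(0)}$ energy identities give precompactness, the invariant set is shown to be trivial through orthogonality to $(1-x)^n$, and the result extends to all of $L^2(0,1)$ by density and contractivity. The only differences are cosmetic. You carry out the LaSalle step by hand via the $\omega$-limit set instead of citing the abstract invariance principle, and you phrase the repeated time-differentiation through $(\mathcal V^*)^n 1=(1-y)^n/n!$.
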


The proof of Proposition~\ref{prop:L2-reduced-stability} is based on the LaSalle invariance principle, together with
Lyapunov energy estimates and a smooth approximation argument. We first
recall the following infinite-dimensional version of
LaSalle's invariance principle, which extends the classical result of
LaSalle \cite{L60}; see, e.g., \cite[Theorems~9.1.8 and~9.2.3]{CH98}.

\begin{lemma}[LaSalle's invariance principle]
Let $X$ be a Banach space and $(S(t))_{t\geq0}$ a $C_0$-semigroup on $X$.
Let $V:X\to\mathbb R$ be a continuous Lyapunov functional, non-increasing along the trajectories of $S(t)$.
Assume that
$
\dot V(x):=
\left.\frac{\mathrm d}{\mathrm dt}V(S(t)x)\right|_{t=0^+}
$
exists for every $x\in X$ and defines a continuous function on $X$.
Define $E:=\{x\in X:\dot V(x)=0\}$, and let
$
M:=\{x\in E:S(t)x\in E,\ \forall t\geq0\}
$
be the largest positively invariant subset of $E$.

If, for some $x_0\in X$, the orbit $\{S(t)x_0:t\geq0\}$ is precompact
in $X$, then
\[
\operatorname{dist}(S(t)x_0,M)\to0,\qquad t\to\infty.
\]
\end{lemma}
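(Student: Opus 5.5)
The proof goes through the $\omega$-limit set of the orbit. Fix $x_0\in X$ with precompact orbit $\gamma^+(x_0):=\{S(t)x_0:t\geq0\}$. Define
\[
\omega(x_0):=\{y\in X:\ \exists\, t_n\to\infty \text{ with } S(t_n)x_0\to y\}.
\]
The plan is to show three things: $\omega(x_0)$ is nonempty and attracts the orbit, $\omega(x_0)$ is positively invariant, and $\omega(x_0)\subset M$. The conclusion then follows from $\operatorname{dist}(S(t)x_0,M)\leq\operatorname{dist}(S(t)x_0,\omega(x_0))$.

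\emph{Step 1 (nonemptiness and attraction).} Because the closure of $\gamma^+(x_0)$ is compact, any sequence $S(t_n)x_0$ with $t_n\to\infty$ has a convergent subsequence, so $\omega(x_0)\neq\emptyset$. Suppose $\operatorname{dist}(S(t)x_0,\omega(x_0))\not\to0$. Then there are $\varepsilon>0$ and $t_n\to\infty$ with $\operatorname{dist}(S(t_n)x_0,\omega(x_0))\geq\varepsilon$. Extracting a convergent subsequence gives a limit lying in $\omega(x_0)$, which is a contradiction.

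\emph{Step 2 (positive invariance).} Let $y=\lim S(t_n)x_0$ and fix $t\geq0$. Since $S(t)$ is a bounded operator and the semigroup property holds, $S(t)y=\lim S(t+t_n)x_0$. Hence $S(t)y\in\omega(x_0)$.

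\emph{Step 3 (constancy of $V$ on $\omega(x_0)$).} The map $t\mapsto V(S(t)x_0)$ is non-increasing. It is also bounded below, because $V$ is continuous on the compact closure of the orbit. Therefore it converges to some limit $c$. By continuity of $V$, every $y\in\omega(x_0)$ satisfies $V(y)=\lim V(S(t_n)x_0)=c$. Combining with Step 2, for $y\in\omega(x_0)$ the function $t\mapsto V(S(t)y)$ is identically equal to $c$.

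\emph{Step 4 ($\omega(x_0)\subset M$).} Fix $y\in\omega(x_0)$ and $t\geq0$. By Step 3,
\[
\dot V(S(t)y)=\lim_{h\to0^+}\frac{V(S(h)S(t)y)-V(S(t)y)}{h}=\lim_{h\to0^+}\frac{c-c}{h}=0,
\]
using $S(h)S(t)y=S(t+h)y$. Hence $S(t)y\in E$ for all $t\geq0$, which means $y\in M$. Together with Step 1 this gives $\operatorname{dist}(S(t)x_0,M)\to0$.

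The only step needing care is the one-sided derivative in Step 4. The definition of $\dot V$ requires the derivative at $0^+$ of $h\mapsto V(S(h)x)$ to exist at every point, and at the points $S(t)y$ it is trivially zero because $V$ is constant along such trajectories. The continuity of $\dot V$ is not needed in this argument. Its role is elsewhere: it ensures that $E$ (and hence $M$) is closed, so that $\operatorname{dist}(\cdot,M)$ is a meaningful quantity when the lemma is applied.
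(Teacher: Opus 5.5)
Your proof is correct and follows the same $\omega$-limit-set argument as the results the paper cites instead of giving its own proof (Cazenave--Haraux, Theorems~9.1.8 and~9.2.3: for a precompact orbit, $\omega(x_0)$ is nonempty, attracts the orbit and is invariant, and $V$ is constant on it, so $\omega(x_0)\subset M$). One small correction to your closing remark: $\operatorname{dist}(\cdot,M)$ is well defined for any nonempty set $M$, so it does not depend on continuity of $\dot V$; that continuity only ensures that $E$ and $M$ are closed.
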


\begin{proof}[Proof of Proposition~\ref{prop:L2-reduced-stability}]
Consider the Lyapunov functional
\begin{equation}\label{eq:L2-Lyapunov-functional}
V(z)
:=
\frac12\int_0^1 z^2(x)\,\dd x .
\end{equation}

Along the solutions of \eqref{eq:L2-reduced-system}, we have
\begin{align}
\frac{\dd}{\dd t}V(z(t))
&=
\int_0^1 z(t,x)z_t(t,x)\,\dd x
\nonumber\\
&=
-\beta\gamma
\int_0^1
z(t,x)
\left(
\int_0^x z(t,y)\,\dd y
\right)\,\dd x
\nonumber\\
&=
-\frac{\beta\gamma}{2}
\left(
\int_0^1 z(t,x)\,\dd x
\right)^2
\leq0 .
\label{eq:L2-V}
\end{align}

We first consider initial data in the dense subspace
\[
\mathcal D:=\mathcal H_{(0)}^1(0,1)
\subset L^2(0,1).
\]
Let $\widetilde z_0\in\mathcal D$.  By Proposition~\ref{prop:L2-reduced-wellposedness}, the corresponding solution
remains in $\mathcal H_{(0)}^1(0,1)$ and hence satisfies $z(t,0)=0$.
Moreover, differentiating the reduced equation with respect to $x$ gives
$z_{xt}=-\beta\gamma z$.  Therefore,
\[
\begin{aligned}
\frac{\dd}{\dd t}\frac12\|z_x(t,\cdot)\|_{L^2(0,1)}^2
&=
-\beta\gamma\int_0^1z_x(t,x)z(t,x)\,\dd x\\
&=
-\frac{\beta\gamma}{2}z^2(t,1)
\leq0.
\end{aligned}
\]
Together with $V(z(t))\leq V(\widetilde z_0)$, this yields
\[
\sup_{t\geq0}\|z(t,\cdot)\|_{H^1(0,1)}<\infty .
\]
The compact embedding $H^1(0,1)\hookrightarrow L^2(0,1)$ therefore
implies that the orbit $\{z(t,\cdot):t\geq0\}$ is precompact in
$L^2(0,1)$.

Hence, all the assumptions of the LaSalle invariance principle are
satisfied. Consequently,
\[
\operatorname{dist}_{L^2}(z(t,\cdot),M)\to0
\qquad\text{as }t\to\infty,
\]
where $M$ is the largest positively invariant subset of $E$.

By Proposition~\ref{prop:L2-largest-invariant-set}, which will be stated
and proved below, this largest
positively invariant subset is $M=\{0\}$. Therefore,
\[
\lim_{t\rightarrow\infty}
\|z(t,\cdot)\|_{L^2(0,1)}
=0 .
\]
This proves the result for
$\widetilde z_0\in\mathcal D$.

It remains to extend the conclusion to arbitrary
$\widetilde z_0\in L^2(0,1)$.
Let $(S(t))_{t\geq0}$ denote the semigroup of the reduced system. Applying the
Lyapunov estimate \eqref{eq:L2-V} to the difference of two solutions
gives
\[
\|S(t)\phi-S(t)\psi\|_{L^2(0,1)}
\leq
\|\phi-\psi\|_{L^2(0,1)},
\qquad t\geq0.
\]

Choose $\widetilde z_0^n\in\mathcal D$ such that
$\widetilde z_0^n\to\widetilde z_0$ in $L^2(0,1)$.
For any fixed $n$, the triangle inequality and the contractivity of $S(t)$
on $L^2(0,1)$ yield
\begin{align*}
\|S(t)\widetilde z_0\|_{L^2}
&\leq
\|S(t)(\widetilde z_0-\widetilde z_0^n)\|_{L^2}
+
\|S(t)\widetilde z_0^n\|_{L^2}
\\
&\leq
\|\widetilde z_0-\widetilde z_0^n\|_{L^2}
+
\|S(t)\widetilde z_0^n\|_{L^2}.
\end{align*}
Taking the limit superior on both sides, we obtain
\[
\limsup_{t\to\infty}\|S(t)\widetilde z_0\|_{L^2}
\leq
\|\widetilde z_0-\widetilde z_0^n\|_{L^2}.
\]
Since $n$ is arbitrary, letting $n\to\infty$ yields
\[
\lim_{t\to\infty}\|S(t)\widetilde z_0\|_{L^2}=0,
\]
for every $\widetilde z_0\in L^2(0,1)$.
This concludes the proof of Proposition~\ref{prop:L2-reduced-stability}.
\end{proof}

We now characterize the largest positively invariant subset used in the
preceding proof.

\begin{proposition}\label{prop:L2-largest-invariant-set}
The largest positively invariant subset of
\[
E
=
\left\{
z\in L^2(0,1):
\dot V(z)=0
\right\}
\]
is given by $M=\{0\}$, where $V$ is defined in
\eqref{eq:L2-Lyapunov-functional}.
\end{proposition}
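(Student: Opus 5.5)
From the Lyapunov computation \eqref{eq:L2-V}, $\dot V(z)=-\frac{\beta\gamma}{2}\bigl(\int_0^1 z\,\dd x\bigr)^2$. Since $\beta\gamma>0$, this gives
\[
E=\Bigl\{z\in L^2(0,1):\int_0^1 z(x)\,\dd x=0\Bigr\}=\{z:(\mathcal Vz)(1)=0\}.
\]
The plan is to take $z_0\in M$ and set $z(t)=e^{-\beta\gamma t\mathcal V}z_0$. Positive invariance gives $(\mathcal V z(t))(1)=0$ for all $t\ge0$. Since $t\mapsto z(t)$ is real-analytic (even entire) in $L^2$, and $\phi\mapsto(\mathcal V\phi)(1)$ is a bounded linear functional, all $t$-derivatives at $t=0$ vanish. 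Using $z^{(n)}(0)=(-\beta\gamma)^n\mathcal V^n z_0$, we obtain
\[
(\mathcal V^{n+1}z_0)(1)=\int_0^1\frac{(1-y)^n}{n!}\,z_0(y)\,\dd y=0,\qquad n\ge0 .
\]
Thus $z_0$ is orthogonal in $L^2(0,1)$ to every polynomial in $(1-y)$, that is, to all polynomials. By density of polynomials in $L^2(0,1)$ (Weierstrass), $z_0=0$. Clearly $\{0\}$ is invariant and contained in $E$, so $M=\{0\}$.

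Equivalently, and more concretely, one can write the solution explicitly. Differentiating in $x$ gives $z_{xt}=-\beta\gamma z$, so in Laplace/series form
\[
z(t)=\sum_n\frac{(-\beta\gamma t)^n}{n!}\mathcal V^n z_0 .
\]
Hence
\[
(\mathcal V z(t))(1)=\int_0^1 G(t,1-y)\,z_0(y)\,\dd y,\qquad G(t,s)=\sum_n \frac{(-\beta\gamma t)^n s^n}{(n!)^2},
\]
which is a Bessel-$J_0$ type function $J_0\bigl(2\sqrt{\beta\gamma ts}\bigr)$ (up to normalization). Its vanishing for all $t$ yields the same moment conditions. I would present the derivative/moment argument, since it avoids special functions.

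The main point needing care is justifying termwise differentiation and the identity $\frac{d^n}{dt^n}z(0)=(-\beta\gamma)^n\mathcal V^n z_0$. This follows directly from $\mathcal V$ being bounded, so the semigroup is the norm-convergent exponential series, together with the Cauchy formula $(\mathcal V^{k}\phi)(x)=\int_0^x\frac{(x-y)^{k-1}}{(k-1)!}\phi(y)\,\dd y$, which is proved by induction. No compactness or spectral input is needed beyond this.
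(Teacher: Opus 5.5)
Your proof is correct and follows essentially the same route as the paper: you differentiate the constraint $\int_0^1 z(t,x)\,\dd x=0$ repeatedly in $t$ to obtain the moment conditions $\int_0^1(1-y)^n z\,\dd y=0$, then conclude by density of polynomials in $L^2(0,1)$. The only difference is cosmetic: you extract the moments of $z_0$ directly at $t=0$ via the exponential series and the Cauchy formula for $\mathcal V^{n+1}$, whereas the paper derives them for $z(t,\cdot)$ at each $t$ by iterated differentiation and a change of the order of integration, and then lets $t\to0^+$.
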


\begin{proof}
Let $M$ denote the largest positively invariant subset of $E$.
Take any initial condition $\widetilde z_0\in M$.

Since $M$ is positively invariant, we have
$z(t,\cdot)\in M\subset E$ for all $t\geq0$.

Therefore, $\dot V(t)=0$ for $t\geq0$.  From
\eqref{eq:L2-V}, we obtain
\[
\int_0^1 z(t,x)\,\dd x=0,
\qquad t\geq0 .
\]

Differentiating with respect to time gives
\[
\int_0^1 z_t(t,x)\,\dd x=0.
\]

Using the equation
$z_t(t,x)=-\beta\gamma\int_0^x z(t,y)\,\dd y$, we obtain
\[
\int_0^1
\left(
\int_0^x z(t,y)\,\dd y
\right)\,\dd x
=0 .
\]

Changing the order of integration, we obtain
\begin{align*}
\int_0^1
\left(
\int_0^x z(t,y)\,\dd y
\right)\,\dd x
&=
\int_0^1
\left(
\int_y^1 z(t,y)\,\dd x
\right)\,\dd y
\\
&=
\int_0^1
(1-y)z(t,y)\,\dd y .
\end{align*}

Repeating the above argument, we obtain
\[
\int_0^1(1-x)^n z(t,x)\,\dd x=0,
\qquad n\geq0 .
\]

Hence, $z(t,\cdot)$ is orthogonal to every polynomial on $(0,1)$. Since polynomials are dense in $L^2(0,1)$, we conclude that
$z(t,\cdot)=0$ in $L^2(0,1)$ for every $t>0$. Since
$z\in C([0,\infty);L^2(0,1))$, letting $t\to0^+$ yields
$\widetilde z_0=0$.

Thus \(M\subset\{0\}\). Conversely, \(0\in E\), and the zero solution is positively invariant. Hence \(\{0\}\subset M\), and therefore \(M=\{0\}\).
\end{proof}

\subsection{Proof of Theorem~\ref{thm:main}}
\label{subsec:proof-main-theorem}
We now return to the original closed-loop system and complete the proof of the main result.

Combining Theorem~\ref{thm:L2-target-W0-stability} with
Propositions~\ref{prop:L2-wellposed-W0} and
\ref{prop:transformation}, we obtain the well-posedness and asymptotic
stability of the original closed-loop system. More precisely, under
the feedback law \eqref{eq:explicit-feedback-law}, for every
$(u_0,v_0)\in L^2(0,1)\times L^2(0,1)$, the closed-loop system
admits a unique solution
\[
(u,v)\in C([0,\infty);L^2(0,1)\times L^2(0,1)),
\]
with the equations and boundary condition understood in the sense of
distributions, and
\[
\lim_{t\to\infty}
\|(u(t),v(t))\|_{L^2(0,1)\times L^2(0,1)}=0.
\]
This completes the proof of Theorem~\ref{thm:main}.

\subsection{Alternative proof via an abstract strong stability criterion}
\label{subsec:alternative-strong-stability}

The stability of the reduced system \eqref{eq:L2-reduced-system} can
also be established directly by an abstract operator argument.  We work
on the complexification
$X=L^2(0,1;\mathbb C)$ of the real space $L^2(0,1)$.  Define the Volterra operator
$\mathcal V\in\mathcal L(X)$ by
\[
(\mathcal Vz)(x):=\int_0^x z(y)\,\dd y,
\]
and set
\[
A:=-\beta\gamma\mathcal V,
\qquad D(A):=X.
\]
By Proposition~\ref{prop:L2-reduced-wellposedness}, $A$ generates the semigroup
$(S(t))_{t\geq0}$.
The real subspace $L^2(0,1)$ is invariant under this semigroup; hence,
strong stability on $X$ implies the desired strong stability for
real-valued initial data.

We recall the following Arendt--Batty--Lyubich--V\~u theorem \cite{AB88,LP88}.

\begin{lemma}[Arendt--Batty--Lyubich--V\~u theorem]
Let $(S(t))_{t\geq0}$ be a bounded $C_0$-semigroup on a complex
reflexive Banach space $X$, with generator $A$. Assume that
\[
\sigma(A)\cap i\mathbb R \quad\text{is countable},
\qquad
\sigma_p(A)\cap i\mathbb R=\varnothing.
\]
Then $(S(t))_{t\geq0}$ is strongly stable, that is,
\[
\lim_{t\to\infty}\|S(t)x\|_X=0,
\qquad x\in X.
\]
\end{lemma}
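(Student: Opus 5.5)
The statement above is the Arendt--Batty--Lyubich--V\~u theorem, which the paper quotes from \cite{AB88,LP88}. I would not reprove it in full. I would follow the Lyubich--V\~u ``limit isometric semigroup'' route, whose steps fit in a short sketch.

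\emph{Step 1 (renorming and the limit seminorm).} Since $(S(t))_{t\ge0}$ is bounded, the norm $|||x|||:=\sup_{s\ge0}\|S(s)x\|_X$ is equivalent to $\|\cdot\|_X$. In this norm each $S(t)$ is a contraction. Hence $t\mapsto|||S(t)x|||$ is non-increasing, and the limit $\ell(x):=\lim_{t\to\infty}|||S(t)x|||$ exists. Strong stability is equivalent to $\ell\equiv0$. The set $N:=\ker\ell$ is a closed $S$-invariant subspace. Let $Y$ be the completion of $X/N$ with respect to $\ell$. The semigroup induces a $C_0$-semigroup $(T(t))$ on $Y$, and $T(t)$ is isometric because $\ell(S(t)x)=\ell(x)$. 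We argue by contradiction and assume $Y\neq\{0\}$.

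\emph{Step 2 (spectral inclusion).} Let $B$ be the generator of $(T(t))$. Since $(T(t))$ is isometric, $\sigma(B)\subset\{\operatorname{Re}\lambda\le0\}$. In fact one shows $\sigma(B)\subset i\mathbb R$: either extend $(T(t))$ to an isometric group on a larger space, or use a direct argument showing that $\lambda-B$ is bounded below with dense range for $\operatorname{Re}\lambda<0$. Next, the resolvent $R(\lambda,A)$ commutes with $S$ and therefore passes to the quotient and extends to $Y$. This gives $\sigma(B)\subset\sigma(A)\cap i\mathbb R$ off a set one controls; I expect this inclusion to be the most delicate technical point. By hypothesis, $\sigma(B)$ is then a countable closed subset of $i\mathbb R$.

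\emph{Step 3 (isolated points give eigenvectors).} Pass to the isometric group extension on a space $\widehat Y$, whose generator has spectrum inside $\sigma(B)$. The spectrum of a group generator is nonempty. Being countable and closed, it has an isolated point $i\mu$, by Baire's theorem. Let $P$ be the Riesz projection at $i\mu$. The restriction of $e^{-i\mu t}T(t)$ to $P\widehat Y$ is a doubly power-bounded operator with spectrum $\{1\}$, so Gelfand's theorem shows it equals the identity. Hence $i\mu$ is an eigenvalue of the extended generator. Dualizing, $i\mu$ belongs to $\sigma_p(B^*)$, and pulling the eigenfunctional back through the quotient map shows $i\mu\in\sigma_p(A^*)$.

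\emph{Step 4 (use of reflexivity).} Reflexivity of $X$ and boundedness of $S$ imply that the Ces\`aro means $\frac1\tau\int_0^\tau e^{-i\mu s}S(s)\,\dd s$ are mean ergodic. This gives the splitting $X=\ker(A-i\mu)\oplus\overline{\operatorname{ran}(A-i\mu)}$. A nonzero functional annihilating $\operatorname{ran}(A-i\mu)$, namely the eigenvector of $A^*$ from Step 3, therefore forces $\ker(A-i\mu)\neq\{0\}$. This contradicts $\sigma_p(A)\cap i\mathbb R=\varnothing$. Hence $Y=\{0\}$, $\ell\equiv0$, and $(S(t))$ is strongly stable.

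I expect the main obstacle to be Steps 2--3, that is, the construction of the limit isometric (semi)group and the transfer of spectral information from $A$ to it. Steps 1 and 4 are routine.
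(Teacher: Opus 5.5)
The paper does not prove this lemma; it only cites \cite{AB88,LP88}. Your outline follows the Lyubich--V\~u route, which is a legitimate way to prove it. Step 1, the Gelfand argument in Step 3, the pull-back $q^*\psi$ of an eigenfunctional, and the mean-ergodic Step 4 are sound.

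The gap is in Step 2. The claim that an isometric $C_0$-semigroup has $\sigma(B)\subset i\mathbb R$ is false. The right shift on $L^2(0,\infty)$ is isometric, yet its generator $Bf=-f'$, $f(0)=0$, has spectrum equal to the closed left half-plane. Indeed, for $\operatorname{Re}\lambda<0$ the operator $\lambda-B$ is bounded below, but its range is annihilated by $x\mapsto e^{\lambda x}$. So the ``dense range'' half of your direct argument cannot be proved in general. Enlarging to an isometric group on some $\widehat Y$ also says nothing about $\sigma(B)$: for the shift, the translation group on $L^2(\mathbb R)$ has spectrum $i\mathbb R$, while $\sigma(B)$ is the half-plane. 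If $\sigma(B)$ were a half-plane, Step 3 would have no isolated point to use. The countability hypothesis must therefore enter exactly here, and your sketch never says how.

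The repair is standard and has three parts.

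\emph{First}, the inclusion $\sigma(B)\subset\sigma(A)$ holds exactly, with no exceptional set. For $\lambda\in\rho(A)$, commutation gives $\ell(R(\lambda,A)x)\le\|R(\lambda,A)\|\,\ell(x)$, with the operator norm taken with respect to $|||\cdot|||$. So $R(\lambda,A)$ induces a bounded operator $R_\lambda$ on $Y$. Moreover $q(D(A))\subset D(B)$ with $Bq=qA$ there, and $q(D(A))$ is dense and $T$-invariant, hence a core for $B$. Therefore $R_\lambda$ is a two-sided inverse of $\lambda-B$.

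\emph{Second}, isometry gives $\|(\lambda-B)y\|\ge|\operatorname{Re}\lambda|\,\|y\|$ for $\operatorname{Re}\lambda<0$. Hence $\partial\sigma(B)\subset i\mathbb R$, and $\sigma(B)\cap\{\operatorname{Re}\lambda<0\}$ is open and closed in the connected open half-plane. It is therefore either empty or the whole half-plane. The second alternative would force $i\mathbb R\subset\sigma(B)\cap i\mathbb R\subset\sigma(A)\cap i\mathbb R$, contradicting countability. Hence $\sigma(B)\subset\sigma(A)\cap i\mathbb R$ is countable.

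\emph{Third}, now $\|R(\lambda,B)\|\le|\operatorname{Re}\lambda|^{-1}$ on both open half-planes. So $-B$ also generates a contraction semigroup, and $(T(t))$ is already an isometric group on $Y$ itself; no enlargement $\widehat Y$ is needed in Step 3. With these corrections, the rest of your argument goes through.
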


\begin{proof}[Alternative proof of Proposition~\ref{prop:L2-reduced-stability}]
\leavevmode\par
Consider the complexification $X=L^2(0,1;\mathbb C)$. By the
complexified form of the energy identity \eqref{eq:L2-V}, using the
complex inner product and taking real parts, the semigroup generated
by $A$ is contractive and hence bounded.

For every $n\geq1$, the iterates of the Volterra operator satisfy
\[
(\mathcal V^n\phi)(x)
=
\frac{1}{(n-1)!}
\int_0^x(x-y)^{n-1}\phi(y)\,\dd y,
\]
so Young's convolution inequality gives
\[
\|\mathcal V^n\|
\leq
\int_0^1\frac{s^{n-1}}{(n-1)!}\,\dd s
=\frac{1}{n!}.
\]
By the spectral radius formula,
\[
r(\mathcal V)
=\lim_{n\to\infty}\|\mathcal V^n\|^{1/n}
\leq
\lim_{n\to\infty}\left(\frac1{n!}\right)^{1/n}
=0.
\]
Hence $r(\mathcal V)=0$. Since the spectrum of a bounded operator on a
complex Banach space is nonempty, it follows that
\[
\sigma(\mathcal V)=\{0\}.
\]
Since $A=-\beta\gamma\mathcal V$, we obtain
\[
\sigma(A)=\{0\},
\]
and hence $\sigma(A)\cap i\mathbb R=\{0\}$ is countable.

It remains to exclude $0$ as an eigenvalue of $A$. If $A\phi=0$, then
$\mathcal V\phi=0$, that is,
\[
\int_0^x\phi(y)\,\dd y=0
\quad\text{for a.e. }x\in(0,1).
\]
Since the left-hand side has an absolutely continuous representative,
it vanishes identically, and differentiation yields $\phi=0$ almost
everywhere. Thus
\[
\sigma_p(A)\cap i\mathbb R=\varnothing.
\]

The space $L^2(0,1;\mathbb C)$ is reflexive. Hence, the
Arendt--Batty--Lyubich--V{\~u} theorem yields
\[
\lim_{t\to\infty}\|S(t)z_0\|_{L^2(0,1)}=0,
\qquad z_0\in L^2(0,1;\mathbb C).
\]
Restricting to the invariant real subspace gives the asymptotic
stability asserted in Proposition~\ref{prop:L2-reduced-stability}.
\end{proof}

\section{Numerical simulations}

We illustrate the behavior of the uncontrolled system and compare it
with proportional and backstepping boundary feedback.  In all simulations,
we take $\beta=\gamma=1$, with the initial conditions
\[
u_0(x)=\cos(\pi x)+x^2,
\qquad
v_0(x)=\sin(\pi x)+x-0.6.
\]
We employ a method-of-lines discretization, using a second-order upwind
finite difference in space and the classical fourth-order Runge--Kutta
method in time.

For comparison, we use the $L^2$-energy
\[
E_{L^2}(t)
:=\|u(t,\cdot)\|_{L^2(0,1)}^2
+\|v(t,\cdot)\|_{L^2(0,1)}^2.
\]

We first consider the zero boundary input $U(t)=0$.  As shown in
Figure~\ref{fig:zero-input-states}, both state components grow in time.
The left panel of Figure~\ref{fig:unstable-energy-comparison} shows that,
after a short transient, the $L^2$-energy increases substantially
over the simulated time interval. This behavior illustrates the failure of the zero boundary input
to stabilize the system.

\begin{figure}[htbp]
\centering
\includegraphics[width=0.98\textwidth]{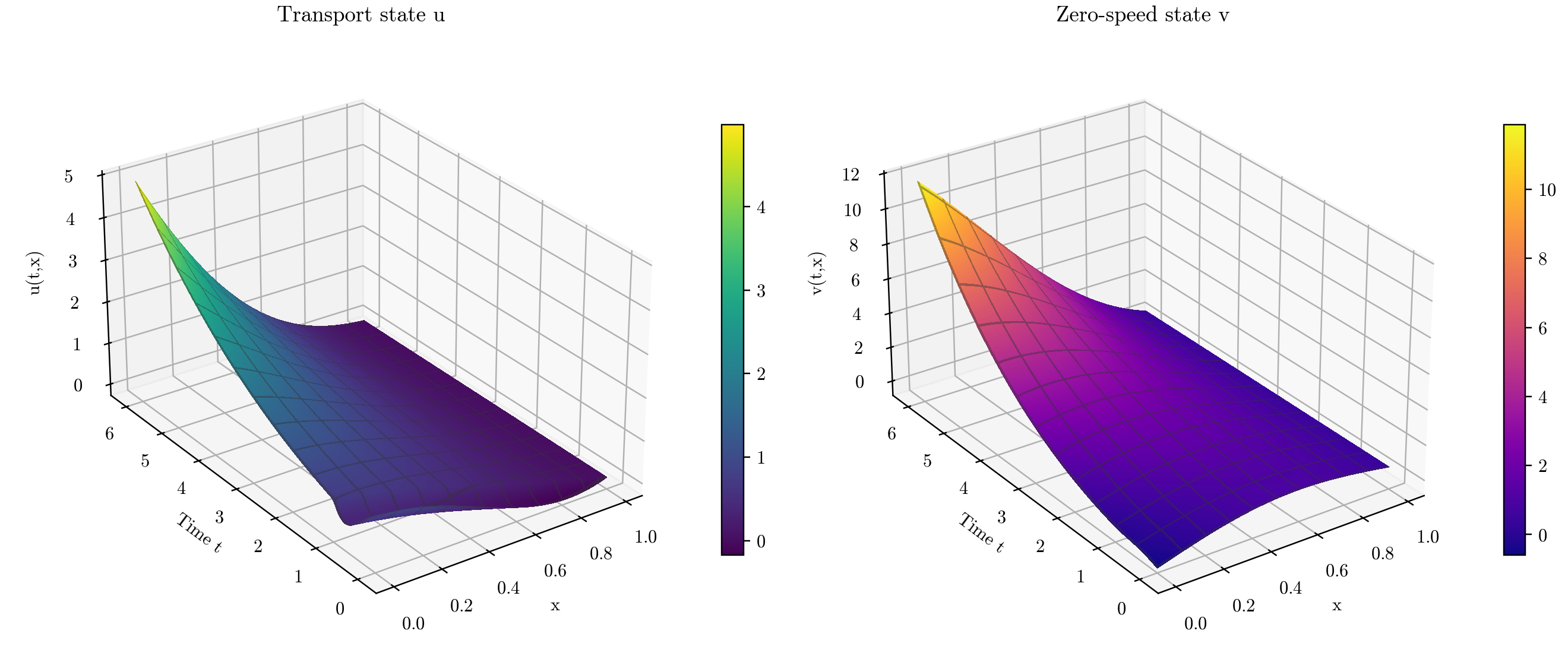}
\caption{Open-loop state evolution with $U(t)=0$.  Left: transport state
$u$.  Right: zero-speed state $v$.}
\label{fig:zero-input-states}
\end{figure}

Next, we apply the proportional boundary feedback
$U(t)=0.5u(t,0)$.  Figure~\ref{fig:proportional-feedback-states} and the
right panel of Figure~\ref{fig:unstable-energy-comparison} show that the
state amplitudes and the corresponding $L^2$-energy continue to
grow.  In particular, this proportional feedback does not stabilize the
system for the chosen initial condition.

\begin{figure}[htbp]
\centering
\includegraphics[width=0.98\textwidth]{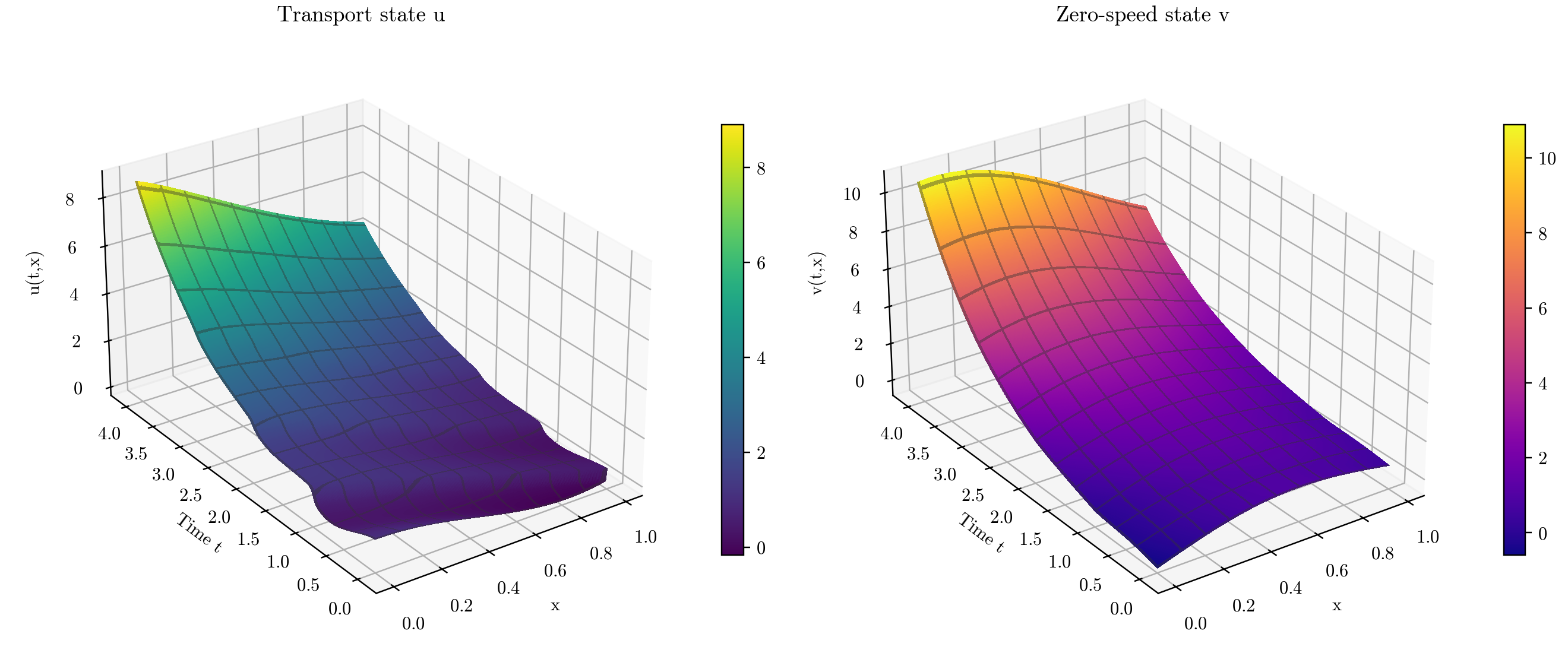}
\caption{State evolution under the proportional feedback
$U(t)=0.5u(t,0)$.  Left: transport state $u$.  Right: zero-speed state
$v$.}
\label{fig:proportional-feedback-states}
\end{figure}

\begin{figure}[htbp]
\centering
\begin{minipage}[c]{0.49\textwidth}
\centering
\includegraphics[width=\linewidth]{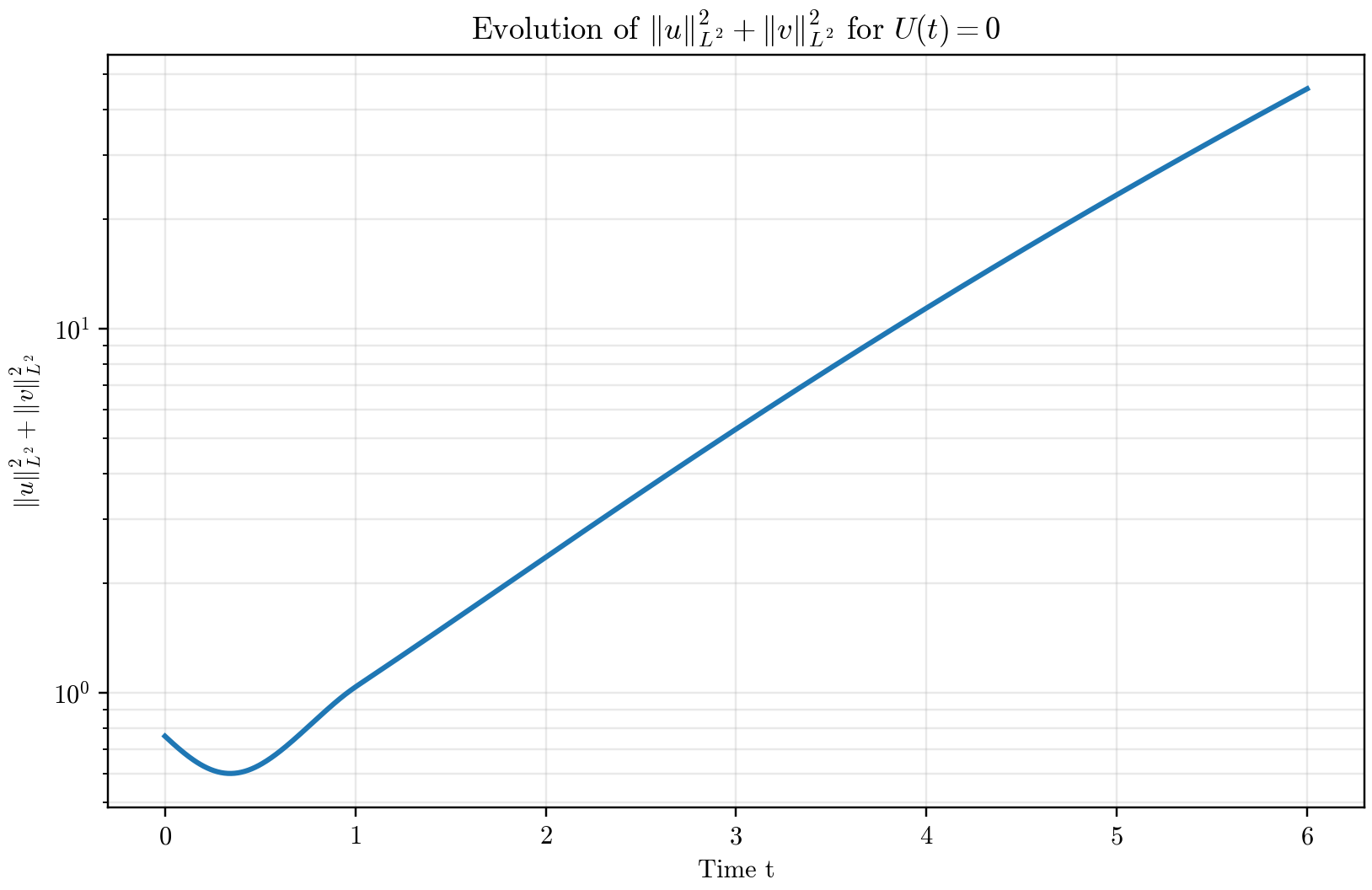}
\end{minipage}
\hfill
\begin{minipage}[c]{0.49\textwidth}
\centering
\includegraphics[width=\linewidth]{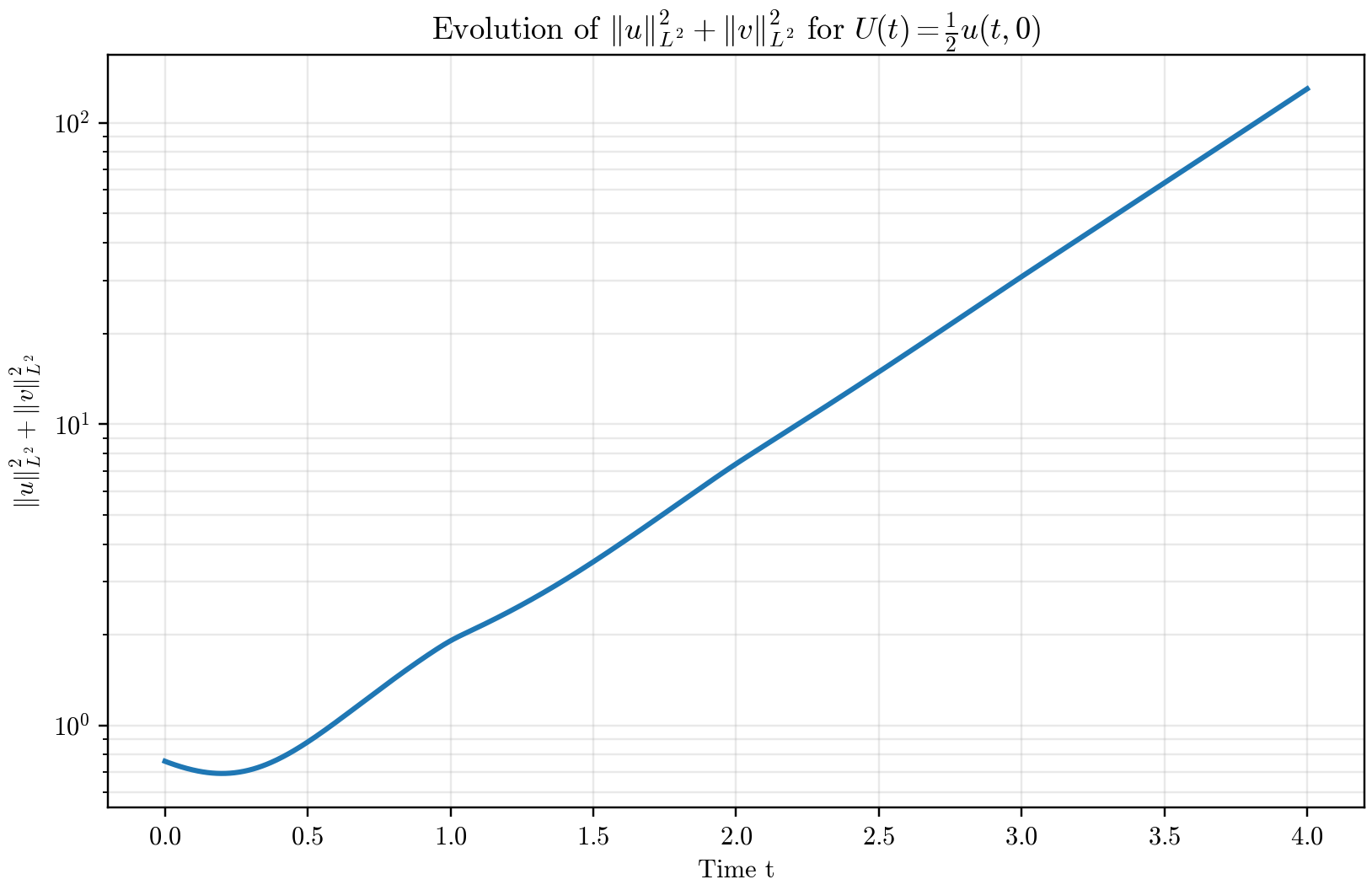}
\end{minipage}
\caption{Comparison of the $L^2$-energies on semilogarithmic scales.
Left: zero boundary input $U(t)=0$.  Right: proportional feedback
$U(t)=0.5u(t,0)$.}
\label{fig:unstable-energy-comparison}
\end{figure}
\FloatBarrier

Finally, we apply the backstepping feedback
\eqref{eq:explicit-feedback-law}.

Figure~\ref{fig:backstepping-W0-states} shows the resulting closed-loop
state evolution.  Both components decrease in amplitude.
Figure~\ref{fig:backstepping-W0-energy-control} shows the corresponding
$L^2$-energy and boundary control over longer time intervals.  The
sustained energy decay is consistent with the asymptotic stability
established in Theorem~\ref{thm:main}.

\begin{figure}[htbp]
\centering
\includegraphics[width=0.98\textwidth]{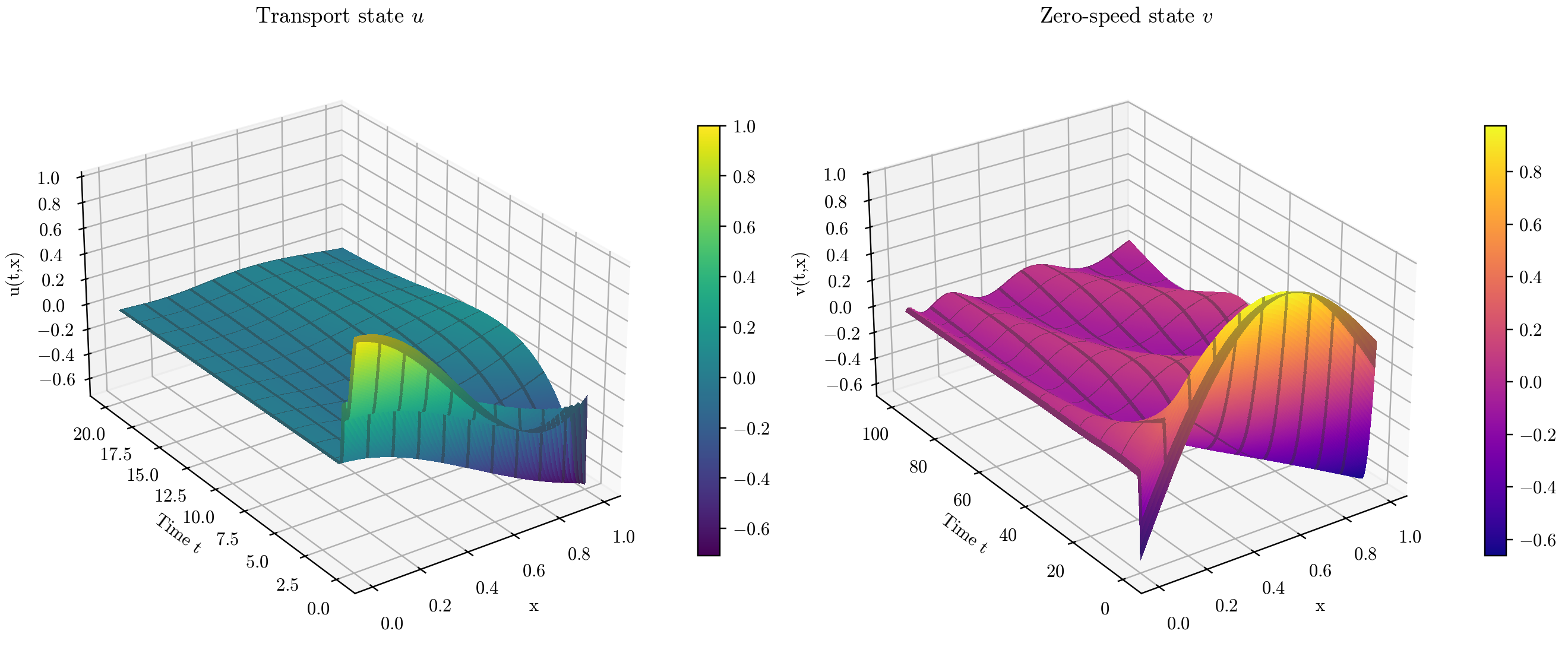}
\caption{Closed-loop state evolution under the backstepping feedback
\eqref{eq:explicit-feedback-law}.
Left: transport state $u$.  Right: zero-speed state $v$.}
\label{fig:backstepping-W0-states}
\end{figure}
\FloatBarrier

\begin{figure}[htbp]
\centering
\begin{minipage}[c]{0.49\textwidth}
\centering
\includegraphics[width=\linewidth]{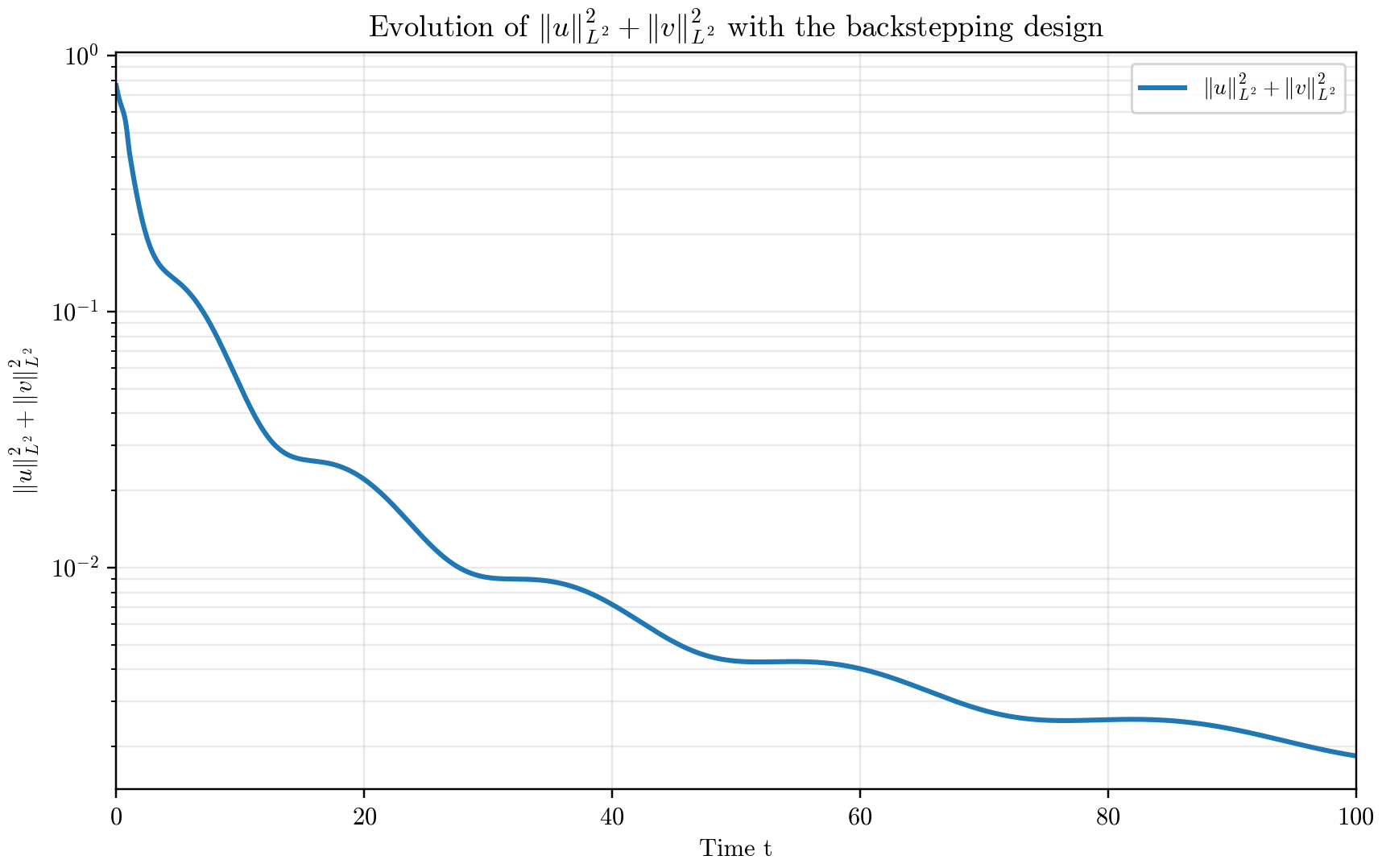}
\end{minipage}
\hfill
\begin{minipage}[c]{0.49\textwidth}
\centering
\includegraphics[width=\linewidth]{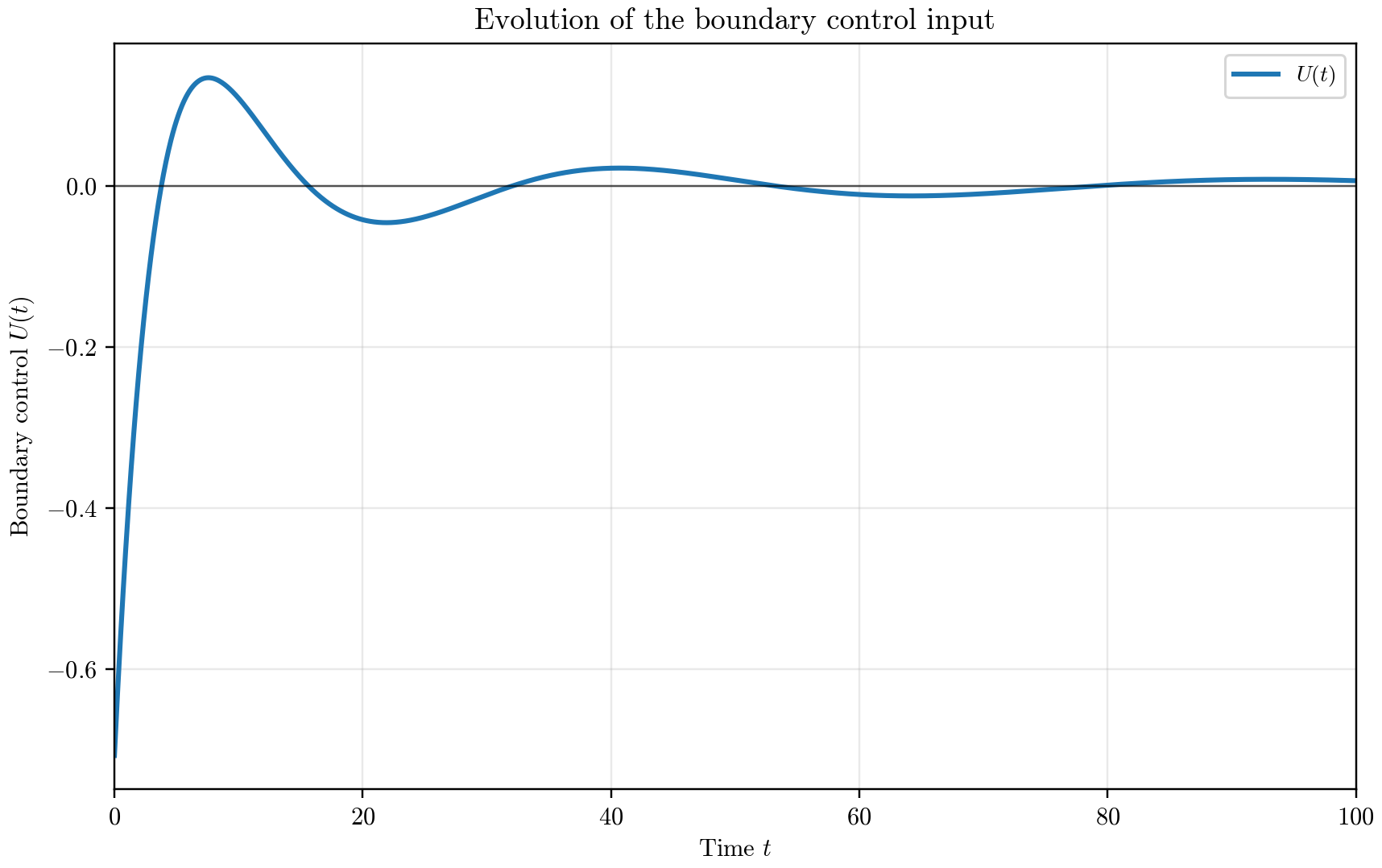}
\end{minipage}
\caption{Closed-loop behavior under the backstepping feedback with
homogeneous target boundary input.  Left: $L^2$-energy of the original
states.  Right: boundary control $U(t)$.}
\label{fig:backstepping-W0-energy-control}
\end{figure}
\FloatBarrier


\bibliography{reference}
\bibliographystyle{alpha}

\end{document}